\newtheorem{lemma}{Lemma}[section]
\newtheorem{proposition}[lemma]{Proposition}
\newtheorem{theorem}[lemma]{Theorem}
\newtheorem{fact}[lemma]{Fact}
\newtheorem{corollary}[lemma]{Corollary}
\theoremstyle{remark}
\newtheorem{remark}[lemma]{Remark}
\newtheorem{example}[lemma]{Example}
\DeclareMathOperator*{\dom}{dom}
\DeclareMathOperator*{\diag}{diag}
\DeclareMathOperator*{\rank}{rank}
\DeclareMathOperator*{\range}{range}
\DeclareMathOperator*{\vspan}{span}
\DeclareMathOperator*{\Fix}{Fix}
\DeclareMathOperator*{\tr}{tr}
\DeclareMathOperator*{\argmin}{\arg\,\min}
\DeclareMathOperator*{\Ind}{\mathbb{I}}
\newcommand{\trans}{\top}
\newcommand{\E}{\mathbb{E}}
\newcommand{\Sym}{\mathbb{S}}
\newcommand{\Orth}{\mathbb{O}}
\newcommand{\Perm}{\mathbb{P}}
\newcommand{\setto}{\rightrightarrows}
\newcommand{\qede}{\hfill$\Diamond$}
\title{Regularity Properties of Non-Negative Sparsity Sets}
\author{Matthew K. Tam\thanks{Institut f\"ur Numerische und Angewandte Mathematik,
		                     Universit\"at G\"ottingen, 37083 G\"ottingen, Germany. E-mail:~\href{mailto:m.tam@math.uni-goettingen.de}{m.tam@math.uni-goettingen.de}}}
\begin{document}

\maketitle

\begin{abstract}
 This paper investigates regularity properties of two non-negative sparsity sets: non-negative sparse vectors, and low-rank positive semi-definite matrices. Novel formulae for their Mordukhovich normal cones are given and used to formulate sufficient conditions for non-convex notions of regularity to hold. Our results provide a useful tool for justifying the application of projection methods to certain rank constrained feasibility problems.
\end{abstract}

\section{Introduction}
 The solutions of many optimization and reconstruction problems admit characterizations in terms of certain sparse objects. For example, it is sometimes possible to uniquely solve  under-determined linear systems under addition assumptions of sparsity \cite{donoho2006l1sparse}. The difficulty arising in such formulations is in dealing with poorly behaved \emph{sparsity functionals}. Two important examples of such functionals are the $\ell_0$-``norm" for vectors, and the rank function for matrices. It is well known that sparsity functionals lead to problems involving non-convexity and NP-hard complexity (see, for example, \cite{foucart2013compressedSensing,natarajan1995sparse}).
 
 A popular approach to addressing the aforementioned difficulty is to employ \emph{convex relaxations} \cite{l0entropy,stableSignalRecovery,justRelax}, thus allowing for application of industrial strength non-linear solvers. For instance, the $\ell_1$-norm promotes sparsity and has consequently been used as a surrogate for its $\ell_0$ counterpart. Such relaxations come with varying strengths and theoretical guarantees. For an introduction to the topic, we refer the reader to \cite[Ch.~4]{foucart2013compressedSensing}. Whilst one may be able to exactly solve a relaxation, it is not always the case that this translates into a satisfactory sparse solution of the original problem.
 
 An alternative approach involves attempting to deal with the original problem's non-convexity directly \cite{ABT2014matrix,restrictedNormalConesAffine,HLN2014sparse}, and thus avoiding the potential complication of recovering a sparse solution from a convex relaxation. Here one can typically only give theoretical guarantees which apply locally ({\em i.e.,} within some neighbourhood of a solution). In these cases, \emph{regularity properties} of the constraint sets play an important role and can often be usefully formulated in the language of \emph{normal cones}.
  
\bigskip  
  
 This paper investigates regularity properties of sparsity sets having additional non-negativity constraints. We focus on two such sets: non-negative sparse vectors, and low-rank positive semi-definite matrices. Simple, novel formulae for their Mordukhovich normal cones are given, and then used to formulate sufficient conditions to ensure various regularity properties hold. Implications for algorithms and applications are discussed, with particular attention given to \emph{low-rank Euclidean distance matrix reconstruction}.

\bigskip

 The remainder of this paper is organized as follows. In Section~\ref{sec:prelim}, we introduce notation and recall results which will be of use. In Section~\ref{sec:normalCones}, we consider the non-negative sparse vector settings, before \emph{lifting} results to their positive semi-definite counterparts. In Section~\ref{sec:regular}, we deduce consequences of the results from the previous two sections including regularity properties for problems having non-negative sparsity sets. Finally, in Section~\ref{sec:application}, various example applications of problems in which non-negative sparsity sets arise are given.

\section{Preliminaries and Notation}\label{sec:prelim}
Let $\E$ denote a finite dimensional real Hilbert space with inner product $\langle\cdot,\cdot\rangle$ and induced norm $\|\cdot\|$. Throughout this paper we focus on two such spaces. The first is $\mathbb{R}^m$ equipped with the standard inner product. The second is the set of real symmetric $m\times m$ matrices denoted $\Sym^m$ equipped with inner product
 \begin{equation*}
  \langle X,Y\rangle := \tr\left(X^\trans Y\right),
 \end{equation*}
where $\tr(\cdot)$ (resp. $(\cdot)^\trans$) denote the trace (resp. transpose) of matrix. The induced norm is the \emph{Frobenius norm} which is given by
 \begin{equation*}
  \|X\| = \sqrt{\sum_{i=1}^m\sum_{j=1}^mX_{ij}^2}.
 \end{equation*}
One may, of course, think of the Frobenius norm as treating matrices as ``long vectors". 

The set of \emph{positive (resp. negative) semi-definite} $m\times m$ matrices is denoted $\Sym^m_+$ (resp. $\Sym^m_-$) and we write $x\succeq 0$ (resp. $x\preceq 0$) to mean $x\in\Sym^m_+$ (resp. $x\in \Sym^m_-$).  The set of $m\times m$ \emph{orthogonal} (resp. \emph{permutation}) matrices is denoted $\Orth^m$ (resp. $\Perm^m$).

The \emph{projection} mapping onto the set $\Omega\subseteq\E$ is the set-valued mapping $P_{\Omega}:\E\setto\Omega$ given by
 \begin{equation*}
   P_{\Omega}(x):=\left\{y\in\Omega:\|x-y\|\leq\inf_{z\in\Omega}\|x-z\|\right\}.
 \end{equation*}
When $P_\Omega(x)=\{y\}$ ({\em i.e.,} $P_\Omega(x)$ is a singleton) we write $P_\Omega(x)=y$. 

In finite dimensions the \emph{Mordukhovich normal cone} to the set $\Omega\subseteq\E$ at a point $\overline x\in\Omega$ can be represented as
 $$N_\Omega(\overline x)=\left\{y\in\E:\exists(x_n),(y_n)\text{ s.t. }x_n\to\overline{x},\,y_n\to y,\,y_n\in\mathbb{R}_+(x_n-P_\Omega(x_n))\right\}.$$
For closed convex sets this simplifies to the classical \emph{convex normal cone} given by
 $$N^{\rm conv}_\Omega(\overline{x}):=\{y\in\E:\langle y,x-\overline{x}\rangle\leq 0,\,\forall x\in \Omega\},$$
but still remains useful in non-convex settings \cite[Ch.~1]{mordukhovich2006generalized}. 
The \emph{proximal normal cone} to the set $\Omega\subseteq\E$ at a point $\overline{x}\in\Omega$ is given by
 $$N^{\text{prox}}_\Omega(\overline{x}):=\mathbb{R}_+\left(P^{-1}_\Omega(\overline{x})-\overline{x}\right).$$
A brief summary of relations between the three normal cones is given in the following fact (see, for example, {\cite[Lem.~2.4]{restrictedNormalCones}).
\begin{fact}[Normal cone inclusions]\label{fact:normalInclusion}
 Let $\Omega\subseteq\E$ be non-empty with $\overline{x}\in\Omega$. Then:
  \begin{enumerate}[(a)]
   \item $N^{\rm conv}_\Omega(\overline{x})\subseteq N^{\rm prox}_\Omega(\overline{x}).$
   \item If $\Omega$ is closed, then $N^{\rm prox}_\Omega(\overline{x})\subseteq N_\Omega(\overline{x})$.
   \item If $\Omega$ is closed and convex, then $N^{\rm conv}_\Omega(\overline{x})=N^{\rm prox}_\Omega(\overline{x})=N_\Omega(\overline{x})$.
  \end{enumerate}
\end{fact}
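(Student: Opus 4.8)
The plan is to route all three statements through a single algebraic reformulation of the proximal normal cone, after which (a) is immediate, (b) requires one short geometric observation, and (c) follows by combining the two with a reverse inclusion. The reformulation I would establish first is the standard quadratic characterization: $y\in N^{\rm prox}_\Omega(\overline{x})$ if and only if there exists $\sigma\geq 0$ with $\langle y,x-\overline{x}\rangle\leq\sigma\|x-\overline{x}\|^2$ for every $x\in\Omega$. To obtain it, note that $\overline{x}\in P_\Omega(z)$ means $\|z-\overline{x}\|\leq\|z-x\|$ for all $x\in\Omega$; expanding $\|z-x\|^2=\|z-\overline{x}\|^2-2\langle z-\overline{x},x-\overline{x}\rangle+\|x-\overline{x}\|^2$ converts this into $\langle z-\overline{x},x-\overline{x}\rangle\leq\tfrac12\|x-\overline{x}\|^2$. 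Scaling by $t\geq 0$ to pass to $y=t(z-\overline{x})$ gives the stated inequality with $\sigma=t/2$, and the computation is reversible, so from such an inequality with $\sigma>0$ one recovers $\overline{x}\in P_\Omega(\overline{x}+\tfrac{1}{2\sigma}y)$. Part (a) is then immediate and needs no closedness: if $y\in N^{\rm conv}_\Omega(\overline{x})$ then $\langle y,x-\overline{x}\rangle\leq 0\leq\sigma\|x-\overline{x}\|^2$ for every $x\in\Omega$ and any $\sigma\geq 0$, so $y\in N^{\rm prox}_\Omega(\overline{x})$.

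For part (b), I would take $y\in N^{\rm prox}_\Omega(\overline{x})$, write $y=t(z-\overline{x})$ with $t\geq 0$ and $\overline{x}\in P_\Omega(z)$, and exhibit the sequences demanded by the definition of $N_\Omega(\overline{x})$. The key sub-step is a segment property: for $z_\lambda:=\overline{x}+\lambda(z-\overline{x})$ with $\lambda\in[0,1]$ one still has $\overline{x}\in P_\Omega(z_\lambda)$. This follows by scaling the proximal inequality $\langle z-\overline{x},x-\overline{x}\rangle\leq\tfrac12\|x-\overline{x}\|^2$ by $\lambda\leq 1$ and running the square expansion backwards. Setting $x_n:=z_{1/n}\to\overline{x}$, I then have $\overline{x}\in P_\Omega(x_n)$, hence $y=t(z-\overline{x})\in\mathbb{R}_+(x_n-\overline{x})\subseteq\mathbb{R}_+(x_n-P_\Omega(x_n))$; taking the constant sequence $y_n:=y\to y$ shows $y\in N_\Omega(\overline{x})$. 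Closedness of $\Omega$ enters only to guarantee that the projections appearing in the definition are nonempty.

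For part (c), parts (a) and (b) already give $N^{\rm conv}_\Omega(\overline{x})\subseteq N^{\rm prox}_\Omega(\overline{x})\subseteq N_\Omega(\overline{x})$, so it remains to prove $N_\Omega(\overline{x})\subseteq N^{\rm conv}_\Omega(\overline{x})$ under convexity. Here I would use that for closed convex $\Omega$ the projection $P_\Omega$ is single-valued, continuous (indeed nonexpansive), and satisfies the variational inequality $\langle x-P_\Omega(x),w-P_\Omega(x)\rangle\leq 0$ for all $w\in\Omega$, i.e. $x-P_\Omega(x)\in N^{\rm conv}_\Omega(P_\Omega(x))$. Given $y\in N_\Omega(\overline{x})$ with witnesses $x_n\to\overline{x}$, $y_n\to y$, and $y_n\in\mathbb{R}_+(x_n-P_\Omega(x_n))$, each $y_n$ satisfies $\langle y_n,w-p_n\rangle\leq 0$ for all $w\in\Omega$, where $p_n:=P_\Omega(x_n)$. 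Since $\overline{x}\in\Omega$ and $P_\Omega$ is continuous, $p_n\to P_\Omega(\overline{x})=\overline{x}$; passing to the limit in the inequality yields $\langle y,w-\overline{x}\rangle\leq 0$ for all $w\in\Omega$, that is $y\in N^{\rm conv}_\Omega(\overline{x})$.

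I expect the main obstacle to be the limiting argument in (c): one must ensure the convex-normal-cone inequality survives passage to the limit, which hinges on the convergence $p_n\to\overline{x}$ of the projection feet — a consequence of continuity of the metric projection onto a convex set — together with the joint convergence $y_n\to y$. The segment property in (b) is the other place requiring a little care, but it reduces to the same reversible square expansion used throughout. Once the quadratic reformulation of the proximal cone is in hand, the remaining steps are routine bookkeeping.
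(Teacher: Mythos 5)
Your proof is correct. Note, however, that the paper itself does not prove this statement at all: it is stated as a \emph{Fact} and dispatched by citation to \cite[Lem.~2.4]{restrictedNormalCones}, so there is no ``paper proof'' to match; what you have produced is a self-contained replacement for that citation. Your route is the standard one and all three steps check out: the quadratic characterization of $N^{\rm prox}_\Omega(\overline{x})$ (existence of $\sigma\geq 0$ with $\langle y,x-\overline{x}\rangle\leq\sigma\|x-\overline{x}\|^2$ on $\Omega$) is exactly reversible as you claim, with the harmless caveat that recovering $\overline{x}\in P_\Omega(\overline{x}+\tfrac{1}{2\sigma}y)$ requires picking $\sigma>0$, which is always possible since the inequality for $\sigma=0$ implies it for every larger $\sigma$; the segment property in (b) follows from the scaling argument you describe, and your witnessing sequences $x_n=\overline{x}+\tfrac1n(z-\overline{x})$, $y_n\equiv y$ are precisely what the paper's finite-dimensional representation of $N_\Omega$ demands (with the paper's projection-based definition, closedness is barely used in your argument, since $P_\Omega(x_n)\ni\overline{x}$ is exhibited directly; closedness is what makes that representation agree with the abstract Mordukhovich cone in the first place); and in (c) the limit passage is legitimate because $p_n=P_\Omega(x_n)\to P_\Omega(\overline{x})=\overline{x}$ by nonexpansiveness of the convex projection, while the possibly unbounded scalars $t_n$ never enter the inequality $\langle y_n,w-p_n\rangle\leq 0$. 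The only thing the citation buys that your argument does not is brevity; what your argument buys is that the fact becomes verifiable from the definitions given in Section~2 alone.
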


\begin{remark}
 For the full definition of the Mordukhovich normal cone, valid in any Banach space, see \cite[Def.~1.1]{mordukhovich2006generalized}. The above definition is an equivalent characterization which holds in the finite dimensional case \cite[Th.~1.6]{mordukhovich2006generalized}.
\qede\end{remark}

Given $X\in\Sym^m$ denote by $\lambda_j(X)$ the $j$th largest eigenvalue of $X$. In this way,
 $$\lambda_1(X)\geq \lambda_2(X)\geq \dots\geq \lambda_m(X).$$
The \emph{eigenvalue map} is the function $\lambda:\Sym^m\to\mathbb{R}^m$ which maps a symmetric matrix to the $m$-dimensional vector of its eigenvalues arranged in non-increasing order. That is,
 $$\lambda(X):=(\lambda_1(X),\lambda_2(X),\dots,\lambda_m(X)).$$

The \emph{indicator function} of a set $\Omega\subseteq\E$ is the function $\iota_\Omega:\E\to\mathbb{R}\cup\{+\infty\}$ which takes the value $0$ on $\Omega$, and $+\infty$ otherwise. A function $f:\mathbb{R}^m\to\mathbb{R}\cup\{+\infty\}$ is called \emph{symmetric} if $f(x)=f(\sigma x)$ for all $x\in\dom f:=\{x\in\mathbb{R}^m:f(x)<+\infty\}$ and $\sigma\in\Perm^m$. A function $F:\Sym^m\to\mathbb{R}\cup\{+\infty\}$ is called \emph{spectral} if $F(U^\trans XU)=F(X)$ for all $X\in\dom F$ and $U\in\Orth^m$. A subset of $\mathbb{R}^m$ (resp. $\Sym^m$) is said to be \emph{symmetric} (resp. \emph{spectral}) if and only if its indicator function is symmetric (resp. spectral function).

Symmetric and spectral functions have a natural one-to-one correspondence. The relationship is given by
  \begin{equation}\label{eq:fF equiv}
   F(X)=(f\circ \lambda)(X),\qquad f(x)=F(\diag x),
  \end{equation}
where $\diag(x)$ denotes the $m\times m$ diagonal matrix whose diagonal entries are given by the entries of the vector $x$. Consequently, many important properties can be transferred between symmetric and spectral functions \cite{lewis2008proxRegSpectral}. For instance, the following fact shows projections onto spectral sets are easily computed whenever the projection onto the corresponding symmetric set is accessible.

\begin{fact}[Projections onto spectral sets]\label{fact:spectralProjections}
 Let $K\subseteq\mathbb{R}^m$ be a symmetric set. For any $X\in\Sym^m$, the projection of $X$ onto the spectral set $\lambda^{-1}(K)$ is given by
  $$P_{\lambda^{-1}(K)}(X)=\left\{U^\trans(\diag(y))U:y\in P_K\lambda(X),\,U\in\Orth^m(X)\right\},$$
 where the set $\Orth^m(X):=\left\{U\in\Orth^m:X=U^\trans(\diag\lambda(X))U\right\}$.
\end{fact}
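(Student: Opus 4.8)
The plan is to reduce this matrix projection to the associated vector projection onto $K$, with the von Neumann (Fan) trace inequality serving as the bridge. The key facts I would invoke are: (i) for symmetric matrices the Frobenius norm agrees with the Euclidean norm of the eigenvalue vector, $\|X\|^2=\sum_{j}\lambda_j(X)^2=\|\lambda(X)\|^2$; and (ii) the trace inequality $\langle X,Z\rangle\le\langle\lambda(X),\lambda(Z)\rangle$ for all $X,Z\in\Sym^m$, with equality precisely when $X$ and $Z$ possess a \emph{simultaneous ordered} spectral decomposition, i.e.\ when there is a single $U\in\Orth^m$ with $X=U^\trans(\diag\lambda(X))U$ and $Z=U^\trans(\diag\lambda(Z))U$.

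First I would prove the distance identity $\inf_{Z\in\lambda^{-1}(K)}\|X-Z\|=\inf_{y\in K}\|\lambda(X)-y\|$. For the inequality ``$\ge$'', expanding the square gives $\|X-Z\|^2=\|X\|^2-2\langle X,Z\rangle+\|Z\|^2$ for every $Z\in\lambda^{-1}(K)$; applying (i)--(ii) and $\|Z\|=\|\lambda(Z)\|$ bounds this below by
$$\|\lambda(X)\|^2-2\langle\lambda(X),\lambda(Z)\rangle+\|\lambda(Z)\|^2=\|\lambda(X)-\lambda(Z)\|^2,$$
and $\lambda(Z)\in K$ since $Z\in\lambda^{-1}(K)$. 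For ``$\le$'', I would take any $y\in P_K\lambda(X)$ and any $U\in\Orth^m(X)$ and set $Z:=U^\trans(\diag y)U$; symmetry of $K$ ensures $\lambda(Z)$, a rearrangement of $y$, again lies in $K$, so $Z\in\lambda^{-1}(K)$, while a direct computation using $U\in\Orth^m(X)$ yields $\langle X,Z\rangle=\langle\lambda(X),y\rangle$ and hence $\|X-Z\|=\|\lambda(X)-y\|=\inf_{y'\in K}\|\lambda(X)-y'\|$.

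The computation in the ``$\le$'' step simultaneously shows that every $Z$ of the asserted form is a genuine projection, giving the inclusion ``$\supseteq$''. For the reverse inclusion, I would take an arbitrary $Z^\ast\in P_{\lambda^{-1}(K)}(X)$; by the distance identity the chain of inequalities from the ``$\ge$'' step must hold with equality throughout. Equality in the trace inequality delivers a single $U\in\Orth^m$ diagonalizing both $X$ and $Z^\ast$ in nonincreasing order, so $U\in\Orth^m(X)$ and $Z^\ast=U^\trans(\diag\lambda(Z^\ast))U$; the equality $\|\lambda(X)-\lambda(Z^\ast)\|=\inf_{y\in K}\|\lambda(X)-y\|$ together with $\lambda(Z^\ast)\in K$ shows $\lambda(Z^\ast)\in P_K\lambda(X)$. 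Setting $y:=\lambda(Z^\ast)$ then exhibits $Z^\ast$ in the required form.

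The main obstacle is the equality characterization in the von Neumann inequality (ii): showing that $\langle X,Z\rangle=\langle\lambda(X),\lambda(Z)\rangle$ forces a \emph{common} orthogonal diagonalizer ordering both spectra, which is delicate in the presence of repeated eigenvalues. This is a classical result (due to Theobald), and I would either cite it or recover it within the spectral correspondence framework of \eqref{eq:fF equiv} and \cite{lewis2008proxRegSpectral}; the remaining steps are then routine norm expansions and bookkeeping with $\Orth^m(X)$.
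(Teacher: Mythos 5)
Your proposal is correct, and its engine---Fan's trace inequality $\langle X,Z\rangle\le\langle\lambda(X),\lambda(Z)\rangle$ together with the characterization of equality as the existence of a simultaneous ordered spectral decomposition---is exactly the tool the paper uses, via \cite[Th.~1.2.1]{ConvAnalNonOpt}. The difference is architectural. The paper proves only the inclusion ``$\subseteq$'', by contradiction: if $Y\in P_{\lambda^{-1}(K)}(X)$ had no simultaneous ordered decomposition with $X$, Fan's inequality would be strict and $U^\trans(\diag\lambda(Y))U$ would be strictly closer to $X$ than $Y$; the reverse inclusion ``$\supseteq$'' is outsourced to \cite[Th.~21]{lewis2008manifolds}. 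You instead prove both inclusions, organized around the distance identity $\inf_{Z\in\lambda^{-1}(K)}\|X-Z\|=\inf_{y\in K}\|\lambda(X)-y\|$. This buys two things: the argument is self-contained (no appeal to Lewis--Malick), and your equality-chain analysis explicitly establishes $\lambda(Z^\ast)\in P_K\lambda(X)$, a step the paper's contradiction argument leaves implicit (it only produces the common diagonalizer, not that $\lambda(Y)$ is a nearest point of $K$ to $\lambda(X)$). The cost is invoking the equality case of von Neumann/Theobald as a black box, but the paper's strict inequality rests on the same fact, so nothing is lost there. One minor tightening: your ``$\le$'' step selects $y\in P_K\lambda(X)$, which presumes this projection is non-empty; either assume $K$ closed (true in every application in the paper, and assumed in Fact~\ref{fact:normalSpectralSet}) or run the same computation along a minimizing sequence in $K$.
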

\begin{proof}
 We prove only the ``$\subseteq$" inclusion. A proof of the other inclusion can be found in \cite[Th.~21]{lewis2008manifolds}. Suppose $Y\in P_{\lambda^{-1}(K)}(X)$ but $X$ and $Y$ do not have a simultaneously spectral decomposition. Let $X=U^\trans\diag\lambda(X)U$ be an ordered spectral decomposition of $X$. Since $Y\in\lambda^{-1}(K)$, the vector $\lambda(Y)\in K$ and hence  
 $$U^\trans(\diag\lambda(Y))U\in \lambda^{-1}(K).$$  
 By \emph{Fan's inequality} \cite[Th.~1.2.1]{ConvAnalNonOpt} and the orthogonality of $U$,
  \begin{align*}
   \|X-Y\| > \|\lambda(X)-\lambda(Y)\|
   &= \|U^\trans(\diag\lambda(X)-\diag\lambda(Y))U\| \\
   &= \|X-U^\trans(\diag\lambda(Y))U\|.
  \end{align*}
 This implies $Y\not\in P_{\lambda^{-1}(K)}(X)$ which is a contradiction, and completes the proof.
\end{proof}

The various \emph{sub-differentials} of symmetric and spectral functions are also closely related. The following fact states the equivalence in the context of normal cones, which is the important setting for purposes of this paper.

\begin{fact}[Normals to spectral sets]\label{fact:normalSpectralSet}
 Let $K\subseteq\mathbb{R}^m$ be a closed, symmetric set. For any $X\in\lambda^{-1}(K)$, the Mordukhovich normal cone to the spectral set $\lambda^{-1}(K)$ is given by 
 $$N_{\lambda^{-1}(K)}(X)=\{U^\trans(\diag(y))U:y\in N_K(\lambda(X)),\,U\in\Orth^m(X)\}.$$
 The corresponding result for the proximal normal cone also holds.
\end{fact}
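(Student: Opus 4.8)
The plan is to establish the formula first for the proximal normal cone, where Fact~\ref{fact:spectralProjections} applies directly, and then to lift it to the Mordukhovich normal cone through the latter's limiting characterization. The engine throughout is \emph{simultaneous diagonalization}: once a matrix and the point whose projection it is are written in a common eigenbasis, the matrix-level arithmetic collapses to coordinatewise arithmetic on the eigenvalue vectors, reducing everything to the symmetric set $K$ in $\mathbb{R}^m$.

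For the proximal inclusion, take $V \in N^{\text{prox}}_{\lambda^{-1}(K)}(X)$, so $V = t(Z-X)$ with $t \geq 0$ and $X \in P_{\lambda^{-1}(K)}(Z)$. Fact~\ref{fact:spectralProjections} supplies $U \in \Orth^m(Z)$ and $y \in P_K\lambda(Z)$ with $X = U^\trans(\diag(y))U$, so that $U$ diagonalizes both $Z$ and $X$ and hence $V = U^\trans(\diag(t(\lambda(Z)-y)))U$. Since $K$ is symmetric and the norm is permutation-invariant, I would choose a permutation $\sigma$ sorting $y$ into the non-increasing order $\lambda(X)$; absorbing the corresponding permutation matrix into $U$ (which keeps the new orthogonal matrix in $\Orth^m(X)$) and using $d(\sigma\lambda(Z), K) = d(\lambda(Z),K)$, one checks that $\sigma\lambda(Z) \in P_K^{-1}(\lambda(X))$. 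Thus $V = U^\trans(\diag(w))U$ with $w := t(\sigma\lambda(Z) - \lambda(X)) \in N^{\text{prox}}_K(\lambda(X))$ and $U \in \Orth^m(X)$, which is the desired form. The reverse inclusion reverses this computation: starting from $w = t(z - \lambda(X)) \in N^{\text{prox}}_K(\lambda(X))$ and $U \in \Orth^m(X)$, set $Z := U^\trans(\diag(z))U$ and invoke the ``$\supseteq$'' half of Fact~\ref{fact:spectralProjections} to conclude $X \in P_{\lambda^{-1}(K)}(Z)$.

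To pass to the Mordukhovich normal cone, I would unwind its limiting definition. Given $V \in N_{\lambda^{-1}(K)}(X)$, pick $Z_n \to X$ and $V_n \to V$ with $V_n \in \mathbb{R}_+(Z_n - P_{\lambda^{-1}(K)}(Z_n))$, and set $X_n := P_{\lambda^{-1}(K)}(Z_n)$, noting $X_n \to X$. Each $V_n$ is a proximal normal at $X_n$, so the proximal step writes $V_n = U_n^\trans(\diag(w_n))U_n$ with $U_n \in \Orth^m(X_n)$ and $w_n \in N^{\text{prox}}_K(\lambda(X_n))$. Compactness of $\Orth^m$ lets me extract $U_n \to U$ along a subsequence; continuity of $\lambda$ gives $\lambda(X_n) \to \lambda(X)$ and, passing to the limit in $X_n = U_n^\trans(\diag(\lambda(X_n)))U_n$, yields $U \in \Orth^m(X)$. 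The vectors $w_n$, recoverable as the diagonals of $U_n V_n U_n^\trans$, then converge to the diagonal $w$ of $UVU^\trans$; since $w_n$ are proximal normals to $K$ at $\lambda(X_n) \to \lambda(X)$ and $w_n \to w$, the defining limiting characterization of $N_K$ gives $w \in N_K(\lambda(X))$, so $V = U^\trans(\diag(w))U$ has the required form. The reverse inclusion approximates a given $w \in N_K(\lambda(X))$ by proximal normals $w_n \in N^{\text{prox}}_K(y_n)$ with $y_n \to \lambda(X)$, lifts each through the reverse proximal step using a common $U \in \Orth^m(X)$, and takes limits.

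The main obstacle is the bookkeeping forced by the non-uniqueness of the diagonalizing orthogonal matrix when $X$ has repeated eigenvalues: $\Orth^m(X)$ is then a positive-dimensional coset of the eigenvalue stabilizer, the sorting permutation $\sigma$ in the proximal step is not canonical, and in the reverse Mordukhovich inclusion one must realize the approximating base points $y_n$ as ordered spectra of matrices $U^\trans(\diag(y_n))U$ sharing a single $U$ while respecting the permutation symmetry of $K$. Verifying that these choices are mutually consistent — so that the limits of $U_n$ and of $w_n$ combine into a valid pair $(U,w)$ independent of the extracted subsequence — is the delicate point, and it is precisely where the invariance of $\diag(\cdot)$ under the stabilizer, together with the symmetry of $K$, must be used.
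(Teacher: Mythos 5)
Your proposal is correct, but it takes a genuinely different route from the paper. The paper's entire proof is a two-line reduction: since $K$ is closed and symmetric, $\iota_K$ is lower semicontinuous and symmetric, and the stated formula (for the Mordukhovich and proximal cones simultaneously) is exactly the transfer theorem of Drusvyatskiy and Kempton \cite[Th.~4.2]{drusvyatskiy215variational} applied to the spectral function $\iota_{\lambda^{-1}(K)}=\iota_K\circ\lambda$. You instead reprove that theorem in the special case of indicator functions: first the proximal formula, by combining Fact~\ref{fact:spectralProjections} with the permutation symmetry of $K$ (sorting $y$ into $\lambda(X)$, absorbing the permutation matrix into $U$, and using $d(\sigma v,K)=d(v,K)$), and then the Mordukhovich formula by a limiting argument resting on compactness of $\Orth^m$, continuity of $\lambda$, and the observation that a limit of diagonal matrices is diagonal. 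What the paper's approach buys is brevity and generality (the cited theorem covers all subdifferentials, not just normal cones); what yours buys is a self-contained argument, using only results already available in the paper, in which the mechanism — simultaneous diagonalization collapsing matrix geometry to coordinatewise geometry in $\mathbb{R}^m$ — is made explicit. One step you treat as definitional deserves to be spelled out: you need that $N_K(\bar u)$ is the outer limit of $N^{\rm prox}_K(u)$ over $K\ni u\to\bar u$, whereas the paper's definition takes limits of vectors $t_n(x_n-p_n)$ in which the \emph{projected} points $x_n$ converge to $\bar u$; your source points $q_n\in P_K^{-1}(\lambda(X_n))$ need not converge to $\lambda(X)$. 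The standard repair — replace $q_n$ by $\lambda(X_n)+\epsilon_n(q_n-\lambda(X_n))$, which keeps $\lambda(X_n)$ as a nearest point, stays on the same ray, and converges to $\lambda(X)$ for $\epsilon_n\downarrow 0$ fast enough — is needed in both your forward and reverse Mordukhovich inclusions. With that inserted, and with the sorting permutations in your reverse inclusion chosen within the tie-blocks of $\lambda(X)$ for large $n$ (possible because strict inequalities among the entries of $\lambda(X)$ are eventually inherited by $y_n$), the bookkeeping you flagged as the delicate point does go through, and your proof is complete.
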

\begin{proof}
	Since $K$ is closed and symmetric, its indicator function $\iota_K$ is lower semi-continuous and symmetric. The result follows from \cite[Th.~4.2]{drusvyatskiy215variational}.
\end{proof}

For further details on the interplay between symmetric and spectral functions, the reader is referred to works of Lewis and others \cite{ConvAnalNonOpt,lewis2008proxRegSpectral,eberhard2015,lewis1999nonsmoothEigen,lewis2003eigenOpt,sendovThesis}.

\section{Normal Cones}\label{sec:normalCones}
 In this section we provide a formulae for normal cones to sparsity sets with non-negative constraints. Our approach is to first address the non-negative sparse vector settings, before lifting the results to symmetric matrices.
 
 The symmetric function of central interest in this paper is the \emph{$\ell_0$-functional} denoted ${\|\cdot\|_0:\mathbb{R}^m\to\{0,1,\dots,m\}}$, which counts the number of non-zero entries of a vector. The corresponding spectral function is the matrix \emph{rank} function. Equivalence \eqref{eq:fF equiv} becomes
 $$\rank=\|\cdot\|_0\circ\lambda,\qquad \|\cdot\|_0=(\rank)\circ(\diag).$$
 Let $s\in\{0,1,2,\dots,m\}$. The set of \emph{non-negative sparse vectors} is denoted
 $$\mathcal{K}_s:=\{x\in\mathbb{R}^m_+:\|x\|_0\leq s\}.$$
These can be viewed as the \emph{lower-level sets} of the function from $\mathbb{R}^m\to\mathbb{R}\cup\{+\infty\}$ defined by
  $x\mapsto \|x\|_0+\iota_{\mathbb{R}^m_+}(x)$.
The set of \emph{low-rank positive semi-definite matrices} is denoted
 \begin{equation*}
   \mathcal{S}_{s}:=\{X\in\Sym^m_+:\rank(X)\leq s\}.
 \end{equation*}
 Similarly, these can be viewed as the lower-level sets of the function from $\Sym^m\to\mathbb{R}\cup\{+\infty\}$ defined by
  $X\mapsto \rank(X)+\iota_{\Sym^m_+}(X).$
  
\begin{remark}[$\mathcal{K}_s$ and $\mathcal{S}_s$ are closed sets]\label{re:Ks Ss are closed}
	For all $s\in\{0,1,2,\dots,m\}$, the set $\mathcal{K}_s$  is closed as the intersection of the two closed sets $\mathbb{R}^m_+$ and $\{x\in\mathbb{R}^m:\|x\|_0\leq s\}$. The latter is closed being a lower-level set of the lower semi-continuous function $\|\cdot\|_0$. Similarly, $\mathcal{S}_s$ is closed as the intersection of the closed sets $\Sym^m_+$ and $\{X\in\Sym^m:\rank X\leq s\}$. The latter being a lower-level set lower semi-continuous function $\rank(\cdot)$.
\qede\end{remark}

\subsection{Non-Negative Sparse Vectors}  
Given a vector $x\in\mathbb{R}^m$ we denote $\mathbb{I}(x):=\{j\in\{1,2,\dots,m\}:x_j\neq 0\}$. For convenience, we write $x^+:=P_{\mathbb{R}_+^m}(x)=\max\{0,x\}$ (in the pointwise sense) and $x^-:=P_{\mathbb{R}_-^m}(x)=\min\{0,x\}$ (see \cite{vn93} for further details). The standard basis for $\mathbb{R}^m$ is denoted $e_1,e_2,\dots,e_m$. The set of \emph{sparse vectors} is denoted
 $$\mathcal{A}_s:=\{x\in\mathbb{R}^m:\|x\|_0\leq s\}.$$
The following proposition states, in particular, that the projection onto the set $\mathcal{K}_s$ of a vector is given by a simple thresholding of the vector keeping only its $s$ largest non-negative entries. 

\begin{proposition}[Projection onto $\mathcal{K}_s$ and its inverse]\label{prop:projKs} The following hold.
 \begin{enumerate}[(a)]
  \item $\forall x\in\mathbb{R}^m$ and $\forall y\in P_{\mathcal{K}_s}(x)$, $\Ind(y)\subseteq\Ind(x^+)$.
  \item \label{projii} $\forall x\in\mathbb{R}^m$, $P_{\mathcal{K}_s}(x)=P_{\mathcal{K}_s}(x^+)$.
  \item \label{projiii} $\forall x\in\mathbb{R}^m_+$, $P_{\mathcal{K}_s}(x)=P_{\mathcal{A}_s}(x)$.
  \item \label{projiv} $\forall x\in\mathbb{R}^m$, $P_{\mathcal{K}_s}(x)=P_{\mathcal{A}_s}(x^+)$ and hence
  $$P_{\mathcal{K}_s}(x)=\left\{y\in\mathbb{R}^m:
        y_j = \begin{cases}
                x_j^+, & j\in\mathbb{J},\\
                0,     & j\not\in\mathbb{J};\\
              \end{cases}\text{ for some }\mathbb{J}\in\mathcal{J}_s(x)\right\},$$
    where $$\mathcal{J}_s(x):=\left\{\mathbb{J}\subseteq\{1,2,\dots,m\}:|\mathbb{J}|=s,\,\min_{j\in\mathbb{J}}x_j^+\geq\max_{j\not\in\mathbb{J}}x_j^+\right\}.$$
  \item If $y\in\mathcal{K}_s$ and $\|y\|_0=s$, then
   $$P^{-1}_{\mathcal{K}_s}(y)=\left\{x:y_j=x_j\text{~for all~}j\in\Ind(y),\,\min_{j\in\Ind(y)}y_j\geq \max_{j\not\in\Ind(y)}x_j^+\right\}.$$
  \item If $y\in\mathcal{K}_s$ and $\|y\|_0<s$ then $P^{-1}_{\mathcal{K}_s}(y)=\{x:x^+=y\}=P_{\mathbb{R}^m_+}^{-1}(y)$.
 \end{enumerate}
\end{proposition}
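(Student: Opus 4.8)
The plan is to derive this final item as a direct consequence of the explicit projection formula established in part~(\ref{projiv}), handling the two asserted equalities separately. The second equality, $\{x:x^+=y\}=P_{\mathbb{R}^m_+}^{-1}(y)$, is immediate from the definitions: since $P_{\mathbb{R}^m_+}(x)=x^+$ is single-valued, one has $x\in P_{\mathbb{R}^m_+}^{-1}(y)$ precisely when $x^+=y$ (and this set is non-empty because $y\in\mathcal{K}_s\subseteq\mathbb{R}^m_+$). So the substantive work is entirely in proving $P^{-1}_{\mathcal{K}_s}(y)=\{x:x^+=y\}$, which I would establish by two inclusions.

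For $\{x:x^+=y\}\subseteq P^{-1}_{\mathcal{K}_s}(y)$, suppose $x^+=y$. By part~(\ref{projiv}), $P_{\mathcal{K}_s}(x)=P_{\mathcal{A}_s}(x^+)=P_{\mathcal{A}_s}(y)$. Since $y\in\mathcal{K}_s\subseteq\mathcal{A}_s$, the point $y$ lies in its own target set, so $\operatorname{dist}(y,\mathcal{A}_s)=0$ is attained at $y$ and hence $y\in P_{\mathcal{A}_s}(y)=P_{\mathcal{K}_s}(x)$; that is, $x\in P^{-1}_{\mathcal{K}_s}(y)$. Note that this direction does not yet invoke the hypothesis $\|y\|_0<s$.

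For the reverse inclusion I would use the explicit description in part~(\ref{projiv}): if $x\in P^{-1}_{\mathcal{K}_s}(y)$ then $y\in P_{\mathcal{K}_s}(x)$, so there is some $\mathbb{J}\in\mathcal{J}_s(x)$ with $y_j=x_j^+$ for $j\in\mathbb{J}$ and $y_j=0$ for $j\notin\mathbb{J}$. The crucial observation is the sparsity gap: all nonzero entries of $y$ lie in $\mathbb{J}$, yet $\|y\|_0<s=|\mathbb{J}|$, so $\mathbb{J}$ must contain at least one index $j$ with $y_j=x_j^+=0$. Because $x_j^+\geq 0$ for all $j$, this gives $\min_{j\in\mathbb{J}}x_j^+=0$, and the defining inequality of $\mathcal{J}_s(x)$ then forces $\max_{j\notin\mathbb{J}}x_j^+\leq 0$, i.e. $x_j^+=0$ for every $j\notin\mathbb{J}$. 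Combining $y_j=x_j^+$ on $\mathbb{J}$ with $y_j=0=x_j^+$ off $\mathbb{J}$ yields $y=x^+$ coordinatewise, as required.

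I expect the only delicate step to be this last one, where the strict inequality $\|y\|_0<s$ is exactly what supplies a ``slack'' index in $\mathbb{J}$ and thereby pins down $\max_{j\notin\mathbb{J}}x_j^+=0$. This is precisely the hypothesis that is absent in part~(e), where $\|y\|_0=s$ leaves no slack and a strictly larger preimage results; so the main thing to get right is to track carefully where the strict inequality is consumed and to confirm that it is genuinely needed.
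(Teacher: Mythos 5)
Your proposal covers only item (f) of the proposition, taking the earlier items (in particular (d)) as given; within that scope it is correct, and it follows the same basic route as the paper, since both derive (f) from the explicit projection formula in (d). The difference is that the paper finishes with a citation: it invokes \cite[Prop.~3.6(vi)]{restrictedNormalConesAffine} for the inverse projection of the sparsity set $\mathcal{A}_s$, whereas you prove the needed step inline. Your slack-index argument --- $\|y\|_0<s=|\mathbb{J}|$ forces some $j\in\mathbb{J}$ with $y_j=x_j^+=0$, so $\min_{j\in\mathbb{J}}x_j^+=0$, and the defining inequality of $\mathcal{J}_s(x)$ then gives $x_j^+=0$ for all $j\notin\mathbb{J}$, hence $y=x^+$ --- is exactly the content of that citation, specialized to this setting, and your bookkeeping is accurate: the strict inequality $\|y\|_0<s$ is consumed precisely there, while the forward inclusion needs only $y\in\mathcal{K}_s$ (indeed $x^+=y$ gives $P_{\mathcal{K}_s}(x)=P_{\mathcal{K}_s}(y)=\{y\}$). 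What your version buys is self-containedness and a transparent view of why maximal sparsity in item (e) leaves no slack and yields a strictly larger preimage; what the paper's version buys is brevity. The only caveat: if your write-up is meant to stand as a proof of the entire proposition, items (a)--(e) still require proof, since your argument leans on the formula and the set $\mathcal{J}_s(x)$ established in (d).
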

\begin{proof}
 (a) Let $y\in P_{\mathcal{K}_s}(x)$ and suppose there exists an index $j_0\in \Ind(y)\setminus\Ind(x^+)$. Then $y_{j_0}>0$ and $x_{j_0}\leq 0$. Letting $z:=y-y_{j_0}e_{j_0}\in\mathcal{K}_s$ we deduce
   $$\|x-y\|^2=\sum_{j\neq j_0}|x_j-y_j|^2+|x_{j_0}-y_{j_0}|^2>\sum_{j\neq j_0}|x_j-y_j|^2+|x_{j_0}-0|^2=\|x-z\|^2,$$
   which contradicts the assumption $y\in P_{\mathcal{K}_s}(x)$.
   
 (b) Let $y\in P_{\mathcal{K}_s}(x)\cup P_{\mathcal{K}_s}(x^+)$ be arbitrary. By (a),     
     $\Ind(y)\subseteq\Ind(x^+)$, hence $\langle x^+-y,x^-\rangle=0$ and
      $$\|x-y\|^2=\|x^+-y\|^2+\|x^-\|^2.$$
     This implies
      $$\argmin_{y\in\mathcal{K}_s}\|x-y\|=\argmin_{y\in\mathcal{K}_s}\|x^+-y\|,$$
     from which the result follows.

  (c) Since $\mathcal{A}_s\cap\mathcal{K}_s=\mathcal{K}_s$, it suffices to show that any $y\in P_{\mathcal{A}_s}(x)$ is contained in $\mathcal{K}_s$. To this end, suppose $y\in P_{\mathcal{A}_s}(x)\setminus\mathcal{K}_s$ and let $j_0$ be an index such that $y_{j_0}<0$. Letting $z:=y-y_{j_0}e_{j_0}\in-\mathcal{K}_s$, since $x\in\mathbb{R}_+^m$ we have
  $$\|x-y\|^2 > \sum_{j\neq j_0}|x_j-y_j|^2+|x_j-0|^2=\|x-z\|^2,$$
  which contradicts the assumption that $y\in P_{\mathcal{A}_s}(x)$.
     
  (d) Follows from \eqref{projii}, \eqref{projiii} and \cite[Prop.~3.6(ii)]{restrictedNormalConesAffine}.
  (e) Follows from \eqref{projiv} and \cite[Prop.~3.6(v)]{restrictedNormalConesAffine}.
  (f) Follows from \eqref{projiv} and \cite[Prop.~3.6(vi)]{restrictedNormalConesAffine}.
\end{proof}

Given a vector $x\in\mathbb{R}^m$ denote by $[x]$ the vector in $\mathbb{R}^m$ obtained by permuting the entries of $x$ in non-increasing order. Under this notation, we note that $[x]_j$, the $j$th coordinate of the vector $[x]$, is the $j$th largest entry in the vector $x$.  For vectors $x,y\in\mathbb{R}^m$, $x\odot y$ denotes the \emph{Hadamard product} given pointwise by $(x\odot y)_j:=x_jy_j$ for all $j\in\{1,2,\dots,m\}$. It is worth noting that for fixed $\overline{x}$, the set
 $\{y\in\mathbb{R}^m:\overline{x}\odot y=0\},$
is simply the perpendicular subspace to the support of $\overline{x}$.

The following lemma is a kind of (non-convex) analogue to \emph{Moreau's decomposition theorem} \cite[Th.~6.29]{monotoneOperator}, which applies to convex cones, for the set $\mathcal{K}_s$. 

\begin{lemma}[Decomposition lemma for $\mathcal{K}_s$]\label{lem:decompKs}
 Let $x,y,z\in\mathbb{R}^m$ with $x=y+z$. Then $y\in P_{\mathcal{K}_s}x$ if and only if $y\in\mathcal{K}_s, y\odot z=0$ and $[y]_s\geq[z]_1$.
\end{lemma}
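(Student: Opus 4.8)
The plan is to prove both directions of the equivalence by unpacking the projection characterization already established in Proposition~\ref{prop:projKs}\eqref{projiv}. Recall that the proposition tells us $P_{\mathcal{K}_s}(x) = P_{\mathcal{A}_s}(x^+)$, and that membership $y\in P_{\mathcal{K}_s}(x)$ is governed by the index sets $\mathcal{J}_s(x)$: the projection retains exactly $s$ of the largest non-negative entries of $x$ (breaking ties arbitrarily). The conditions $y\in\mathcal{K}_s$, $y\odot z = 0$, and $[y]_s \geq [z]_1$ are designed to encode precisely this combinatorial picture: $y\in\mathcal{K}_s$ says $y$ is non-negative and $s$-sparse, $y\odot z = 0$ says the support of $y$ and the support of $z = x - y$ are disjoint (so that on $\mathbb{I}(y)$ we have $z = 0$, i.e. $x = y$ there), and $[y]_s \geq [z]_1$ is the threshold condition ensuring the entries we kept dominate those we discarded.

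**Forward direction.** First I would assume $y\in P_{\mathcal{K}_s}(x)$ and derive the three conditions. Membership $y\in\mathcal{K}_s$ is immediate since the projection lands in $\mathcal{K}_s$ by definition. For $y\odot z = 0$, I would use part \eqref{projiv} of Proposition~\ref{prop:projKs}: on the retained index set $\mathbb{J}\in\mathcal{J}_s(x)$ we have $y_j = x_j^+$, and I must argue that on this set $x_j^+ = x_j$ so that $z_j = x_j - y_j = 0$; off $\mathbb{J}$ we have $y_j = 0$, giving $(y\odot z)_j = 0$ trivially. The subtlety is that entries of $x$ which are negative have $x_j^+ = 0$, so if such an index were selected into $\mathbb{J}$ it would contribute a zero to $y$; I would handle this by noting (via part (a) of the proposition) that $\mathbb{I}(y)\subseteq\mathbb{I}(x^+)$, so every \emph{nonzero} entry of $y$ sits at an index where $x_j > 0$, whence $x_j = x_j^+ = y_j$ and $z_j = 0$ there. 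On the complement of $\mathbb{I}(y)$, $y_j = 0$ kills the product regardless of $z_j$. For the threshold inequality $[y]_s\geq[z]_1$, I would observe that $[y]_s$ is the smallest retained value (the $s$th largest entry of $y$, which on the selected support equals the smallest selected $x_j^+$), while $[z]_1 = \max_j z_j$; since $z_j = x_j - y_j$ equals $x_j$ on discarded indices and $0$ on retained ones, $[z]_1 = \max_{j\notin\mathbb{J}} x_j \leq \max_{j\notin\mathbb{J}} x_j^+$, and the defining inequality of $\mathcal{J}_s(x)$ gives exactly $\min_{j\in\mathbb{J}} x_j^+ \geq \max_{j\notin\mathbb{J}} x_j^+$, which translates into $[y]_s\geq[z]_1$ after accounting for the case $\|y\|_0<s$ where some retained $x_j^+$ may be zero.

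**Reverse direction and the main obstacle.** For the converse, I would assume $y\in\mathcal{K}_s$, $y\odot z = 0$, and $[y]_s\geq[z]_1$, and show $y$ minimizes $\|x - \cdot\|$ over $\mathcal{K}_s$. The strategy is to reconstruct the index set $\mathbb{J} := \mathbb{I}(y)$ (padded to size $s$ if $\|y\|_0 < s$ by adjoining indices where $x_j^+ = 0$) and verify $\mathbb{J}\in\mathcal{J}_s(x)$ so that part \eqref{projiv} certifies $y$ as a projection. The disjoint-support condition $y\odot z = 0$ combined with $x = y + z$ forces $x_j = y_j$ on $\mathbb{I}(y)$ and $x_j = z_j$ off it; since $y\in\mathbb{R}^m_+$ we get $x_j = y_j \geq 0$ on the support, so $x_j^+ = y_j$ there, and the threshold inequality $[y]_s\geq[z]_1$ rearranges to $\min_{j\in\mathbb{I}(y)} y_j \geq \max_{j\notin\mathbb{I}(y)} x_j$, which I would upgrade to the $x^+$ version needed for $\mathcal{J}_s(x)$. \textbf{The main obstacle} I anticipate is the bookkeeping around the two regimes $\|y\|_0 = s$ versus $\|y\|_0 < s$: when $y$ is not maximally sparse, the quantity $[y]_s$ is zero, and one must verify that the remaining non-negative mass of $x$ is genuinely exhausted (i.e. that $[z]_1 \leq 0$ forces all discarded entries of $x$ to be non-positive, so nothing beneficial was left behind). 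Carefully interpreting $[y]_s = 0 \geq [z]_1$ in this degenerate case, and reconciling it with the threshold condition in $\mathcal{J}_s(x)$ which compares $x_j^+$ values, is where the argument requires the most attention; everything else reduces to the norm-splitting identity $\|x-y\|^2 = \|x^+ - y\|^2 + \|x^-\|^2$ already exploited in the proof of Proposition~\ref{prop:projKs}.
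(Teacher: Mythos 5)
Your proposal is correct and follows essentially the same route as the paper's own proof: both directions reduce to the index-set characterization of Proposition~\ref{prop:projKs}\eqref{projiv}, recovering the set $\mathbb{J}_0$ from the support of $y$, padding it to cardinality $s$ in the degenerate case $\|y\|_0<s$ (justified, exactly as you anticipate, by $0=[y]_s\geq[z]_1$ forcing all discarded entries of $x$ to be non-positive), and checking the threshold condition defining $\mathcal{J}_s(x)$. The one imprecision is your claim in the forward direction that $z_j=0$ on retained indices; in fact $z_j=x_j^-\leq 0$ there (nonzero when a retained index has $x_j<0$), which is how the paper writes it, but since $x_j^-\leq 0\leq [y]_s$ this does not disturb the chain of inequalities you describe.
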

\begin{proof}
 Suppose $y\in P_{\mathcal{K}_s}x\subseteq\mathcal{K}_s$. By Proposition~\ref{prop:projKs} there is an index set $\mathbb{J}_0\in\mathcal{J}_s(x)$ such that
  $$y_j=\begin{cases}
         x_j^+, & j\in\mathbb{J}_0,\\
         0,     & j\not\in\mathbb{J}_0;\\
        \end{cases}\qquad\text{and}\qquad 
        \min_{j\in\mathbb{J}_0}x_j^+ \geq \max_{j\not\in\mathbb{J}_0}x_j^+.$$
  Thus $z$ is given pointwise by
   $$z_j=x_j-y_j=\begin{cases}
                           x_j^-, & j\in\mathbb{J}_0,\\
                           x_j,     & j\not\in\mathbb{J}_0.\\
                         \end{cases}.$$
  It follows that $[y]_s\geq [z]_1$ since
   \begin{align*}
     [y]_s = \min_{j\in\mathbb{J}_0}x_j^+ &\geq \max_{j\not\in\mathbb{J}_0}x_j^+ \\
       &\geq \max \left(\{x_j^+:j\not\in\mathbb{J}_0\}\cup\{x_j^-:j\in\mathbb{J}_0\}\right) \\
       &\geq \max \left(\{x_j\;:j\not\in\mathbb{J}_0\}\cup\{x_j^-:j\in\mathbb{J}_0\}\right)=[z]_1.
   \end{align*}
  To show $y\odot z=0$, observe that $(y\odot z)_j=y_jz_j$ is either $x_j^+x_j^-$ if $j\in\mathbb{J}_0$, or $0\cdot x_j$ if $j\not\in\mathbb{J}_0$, which is zero in either case.
  
  Conversely, suppose $y\in\mathcal{K}_s, y\odot z=0$ and $[y]_s\geq [z]_1$. Since $x=y+z$ and $y\odot z=0$, for each $j\in\{1,2,\dots,m\}$ either $y_j=x_j$ and $z_j=0$; or $y_j=0$ and $z_j=x_j$. Since $y\in\mathcal{K}_s$ there is an index set $\mathbb{J}_0$ with $|\mathbb{J}_0|\leq s$ such that we may express
   \begin{equation}\label{eq:projConverse}
   y_j=\begin{cases}
          x_j^+, & j\in\mathbb{J}_0, \\
          0 ,    & j\not\in\mathbb{J}_0; \\
         \end{cases},\qquad
   z_j=\begin{cases}
          x_j^-, & j\in\mathbb{J}_0, \\
          x_j,   & j\not\in\mathbb{J}_0; \\
         \end{cases}
   \end{equation}
   noting that $x_j=x_j^+$ and $x_j^-=0$ for $j\in\mathbb{J}_0$.
   
   To show that $y\in P_{\mathcal{K}_s}(x)$ it suffices to consider the case in which $|\mathbb{J}_0|=s$. For if $|\mathbb{J}_0|<s$ then $[y]_s=0$, hence
   $$0=[y]_s\geq [z]_1 = \max\left(\{x_j^-:j\in\mathbb{J}_0\}\cup\{x_j:j\not\in\mathbb{J}_0\}\right).$$
  In particular, $x_j\leq 0$ for $j\not\in\mathbb{J}_0$, or equivalently $x_j=x_j^-$  for $j\not\in\mathbb{J}_0$. It is therefore possible to replace the index set $\mathbb{J}_0$ with a superset having cardinality $s$ without changing \eqref{eq:projConverse}.
  
   Thus, suppose $|\mathbb{J}_0|=s$ but $\mathbb{J}_0\not\in\mathcal{J}_s(\overline{x})$. Then there exist indices $j_1\in\mathbb{J}_0$ and $j_2\not\in\mathbb{J}_0$ such that $x_{j_1}^+<x_{j_2}^+$. In particular, $x_{j_2}=x_{j_2}^+>0$ and hence
    $$[y]_s=\min_{j\in\mathbb{J}_0}x_j^+\leq x_{j_1}^+<x_{j_2}^+=x_{j_2} \leq \max\left(\{x_j^-:j\in\mathbb{J}_0\}\cup\{x_j:j\not\in\mathbb{J}_0\}\right)=[z]_1.$$
 This contradicts the assumption that $[y]_s\geq [z]_1$ and we therefore conclude that $\mathbb{J}_0\in\mathcal{J}_s(x)$. Proposition~\ref{prop:projKs} now implies $y\in P_{\mathcal{K}_s}(x)$, and thus completes the proof.
\end{proof}

We now provide our first main result: a novel characterization of the Mordukhovich normal cone to the set of non-negative sparse vectors. Given $y\in\mathbb{R}^m$ and an index set $\mathbb{J}\subseteq\{1,\dots,m\}$ the notation $y|_\mathbb{J}= 0$ means $y_j= 0$ for all $j\in\mathbb{J}$.

\begin{theorem}[Mordukhovich normal cone to $\mathcal{K}_s$]\label{th:NKs}
 The Mordukhovich normal cone to the set $\mathcal{K}_s$ at a point $\overline x\in\mathcal{K}_s$ is given by
  $$N_{\mathcal{K}_s}(\overline{x})=\left\{y\in\mathbb{R}^m:\overline{x}\odot y=0,\,y\leq 0\right\}\cup\left\{y\in\mathbb{R}^m:\overline{x}\odot y=0,\,\|y\|_0\leq m-s\right\}.$$
\end{theorem}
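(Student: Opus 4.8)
The plan is to compute $N_{\mathcal{K}_s}(\overline{x})$ directly from the limiting-normal-cone characterization given in the preliminaries, using the decomposition lemma to identify exactly which directions $y_n$ arise as $\mathbb{R}_+(x_n - P_{\mathcal{K}_s}(x_n))$ for nearby points $x_n$. The key structural fact I would exploit is Lemma~\ref{lem:decompKs}: if $y = P_{\mathcal{K}_s}(x)$ and $z = x - y$, then the \emph{proximal normal} directions at $\overline{x}$ are exactly the nonnegative multiples of such $z$'s, and these are characterized by $y \in \mathcal{K}_s$, $y \odot z = 0$, and $[y]_s \geq [z]_1$. So my first move is to understand the proximal normal cone $N^{\text{prox}}_{\mathcal{K}_s}(\overline{x})$ via parts (e) and (f) of Proposition~\ref{prop:projKs}, then take the limiting closure to get the Mordukhovich cone.

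I expect the proof to split according to whether $\|\overline{x}\|_0 = s$ (full sparsity) or $\|\overline{x}\|_0 < s$ (slack). For the \textbf{easy containments}, I would argue: any $y$ with $\overline{x} \odot y = 0$ and $y \leq 0$ should be a proximal normal, since perturbing $\overline{x}$ in a nonpositive direction off its support keeps the projection at $\overline{x}$ (this is essentially the $P_{\mathbb{R}^m_+}^{-1}$ behavior from part (f)); and any $y$ with $\overline{x} \odot y = 0$ and $\|y\|_0 \leq m - s$ should be a limiting normal, obtained by approximating $\overline{x}$ with points of full support $s$ whose projection-residual is supported on a coordinate set of size $\leq m-s$ disjoint from $\mathbb{I}(\overline{x})$. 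The condition $\overline{x}\odot y = 0$ confines $y$ to the subspace perpendicular to the support of $\overline{x}$ in both cases, and the two sparsity/sign conditions distinguish the two ``branches'' of the cone.

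For the \textbf{reverse containment} ($N_{\mathcal{K}_s}(\overline{x})$ is contained in the stated union), I would take a sequence $x_n \to \overline{x}$, $y_n \to y$ with $y_n \in \mathbb{R}_+(x_n - P_{\mathcal{K}_s}(x_n))$, set $p_n \in P_{\mathcal{K}_s}(x_n)$ and $z_n = x_n - p_n$, and apply the decomposition lemma to each $z_n$. Since $p_n \to \overline{x}$ forces $\mathbb{I}(p_n) \supseteq \mathbb{I}(\overline{x})$ eventually (entries of $\overline{x}$ on its support are bounded away from zero), the relation $p_n \odot z_n = 0$ passes to the limit as $\overline{x} \odot y = 0$. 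The sign and sparsity constraints then follow by tracking which branch the $z_n$ fall into: if infinitely many $p_n$ have $\|p_n\|_0 < s$, the residual $z_n$ satisfies $p_n^{+}$-type sign conditions giving $y \leq 0$; otherwise $\|p_n\|_0 = s$ and $z_n$ is supported off a size-$s$ index set, giving $\|y\|_0 \leq m - s$ in the limit by lower semicontinuity of $\|\cdot\|_0$.

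\textbf{The main obstacle} I anticipate is the reverse containment's limiting argument, specifically controlling the \emph{support} and \emph{sign} of the residuals $z_n$ uniformly as $n \to \infty$. The delicate point is that the index set $\mathbb{J}_0 = \mathbb{J}_0(x_n)$ selecting the top-$s$ entries can vary with $n$, and entries of $\overline{x}$ that equal zero but lie on the ``boundary'' between being kept or thresholded may flip between the two branches along the sequence. I would handle this by passing to a subsequence on which the combinatorial type of the projection (i.e., which branch of Lemma~\ref{lem:decompKs} applies and which coordinates are active) is constant, then verify that the limit $y$ lands in the corresponding set of the union; because $\|\cdot\|_0$ is only lower semicontinuous, I must be careful that support can only \emph{drop} in the limit, which is precisely what is needed for the inequality $\|y\|_0 \leq m-s$ to survive.
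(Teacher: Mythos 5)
Your proposal is correct and follows essentially the same route as the paper's proof: the sequence characterization of the Mordukhovich cone combined with the decomposition lemma and projection formula (plus a pigeonhole subsequence with fixed combinatorial type) for the inclusion $\subseteq$, and the same two explicit constructions for $\supseteq$ (direct perturbation off the support for the sign branch, and approximation of $\overline{x}$ by nearby points of maximal support $s$ for the sparsity branch). The only cosmetic difference is that you organize the $\subseteq$ case split on the sequence (whether $\|p_n\|_0 < s$ infinitely often, forcing $z_n \leq 0$, versus eventually $\|p_n\|_0 = s$, forcing $\|z_n\|_0 \leq m-s$) rather than on the limit $y$ as the paper does, with lower semicontinuity of $\|\cdot\|_0$ closing the argument in the same way.
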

\begin{proof}
 ($\subseteq$) Suppose $y\in N_{\mathcal{K}_s}(\overline{x})$. Then there exists sequences $(x_k),(y_k),(z_k)\subseteq\mathbb{R}^m$ such that 
  $$x_k\to\overline{x},\qquad y_k=\alpha_kz_k\to y,\qquad z_k=x_k-p_k,$$
 where $\alpha_k\in\mathbb{R}_+$ and $p_k\in P_{\mathcal{K}_s}(x_k)$. By Lemma~\ref{lem:decompKs}, $p_k\odot z_k=0$ and thus
  $$x_k\odot y_k=(z_k+p_k)\odot (\alpha_kz_k) = \alpha_k (z_k\odot z_k) = z_k\odot y_k.$$
The definition  of $P_{\mathcal{K}_s}$ implies that $\|z_k\|=d(x_k,\mathcal{K}_s)$. Hence, by noting that $x_k\to\overline{x}\in\mathcal{K}_s$ and that the function $d(\cdot,\mathcal{K}_s)$ is continuous, we deduce that $z_k\to 0$. Altogether
  \begin{align*}
   \overline{x}\odot y =\left(\lim_{k\to\infty}x_k\right)\odot \left(\lim_{k\to\infty}y_k\right) 
   &=\lim_{k\to\infty}\left(x_k\odot y_k\right) \\
   &=\lim_{k\to\infty}\left(z_k\odot y_k\right)\\
   &=\left(\lim_{k\to\infty}z_k\right)\odot\left(\lim_{k\to\infty} y_k\right)
   =0\odot y=0.
  \end{align*}
 	By Proposition~\ref{prop:projKs}\eqref{projiv}, for each $k\in\mathbb{N}$, there is an index set $\mathbb{J}_k\in\mathcal{J}_s(x_k)$ such that $p_k$ is of the form
 	  $$ (p_k)_j= \begin{cases}
	            	  (x_k)_j^+, & j\in\mathbb{J}_k,\\
				 	  0,     & j\not\in\mathbb{J}_k.\\
			 	  \end{cases}. $$
	The collection $\{\mathbb{J}_k:k\in\mathbb{N}\}$ is finite, and thus, by the Pigeonhole Princple, there exists a subsequence $({k_l})$ and an index set $\mathbb{J}_0\in\mathcal{J}_s(x_{k_l})$ such that
  \begin{equation}\label{eq:y_kl}
   (p_{k_l})_j = \begin{cases}
                   (x_{k_l})_j^+, & j\in\mathbb{J}_0,     \\
                   0,             & j\not\in\mathbb{J}_0; \\
                  \end{cases} \qquad
    (y_{k_l})_j = \begin{cases}
                   (x_{k_l})_j^-, & j\in\mathbb{J}_0,     \\
                   (x_{k_l})_j,   & j\not\in\mathbb{J}_0. \\
                  \end{cases}
  \end{equation}
 Since $\mathbb{J}_0\in\mathcal{J}_s(x_{k_l})$, for all $l\in\mathbb{N}$ we have
   $$\min_{j\in\mathbb{J}_0}(x_{k_l})_j^+ \geq \max_{j\not\in\mathbb{J}_0}(x_{k_l})_j^+,$$
  and hence
  $$\min_{j\in\mathbb{J}_0}\overline{x}^+_j = \lim_{l\to\infty}\left(\min_{j\in\mathbb{J}_0}(x_{k_l})_j^+\right) \geq \lim_{l\to\infty}\left(\max_{j\not\in\mathbb{J}_0}(x_{k_l})_j^+\right)=\max_{j\not\in\mathbb{J}_0}\overline{x}^+_j.$$
We therefore conclude that $\mathbb{J}_0\in\mathcal{J}_s(\overline{x})$. If $y\not\leq 0$ then there is an index $j_0\in\{1,2,\dots,m\}$ such that $y_{j_0}>0$. Since $y_k\to y$, we assume $l$ to be sufficiently large so that $(y_{k_l})_{j_0}>0$. From the representation of $y_{k_l}$ in \eqref{eq:y_kl} we deduce that $j_0\not\in\mathbb{J}_0$ and $(x_{k_l})_{j_0}=(y_{k_l})_{j_0}>0$. By the definition of $\mathbb{J}_0$, it follows that $$(x_{k_l})_j\geq(x_{k_l})_{j_0}>0 \implies (x_{k_l})_j^-=0,$$
for all $j\in\mathbb{J}_0$. By \eqref{eq:y_kl} we deduce $y_{k_l}|_{\mathbb{J}_0}=0$. 
Since $y_{k_l}\to y$ it follows that $y|_{\mathbb{J}_0}=0$, and therefore $\|y\|_0\leq m-s$.  
  
 ($\supseteq$) Suppose $\overline{x}\odot y=0$, or equivalently, for each $j\in\{1,2,\dots,m\}$, we have that
 \begin{equation}\label{eq:compl}
 	\text{$\overline{x}_j$ and $y_j$ cannot be simultaneously non-zero.}
 \end{equation}
 
 For the $y\leq 0$ case, define sequences $(x_k)$ and $(y_k)$ by
  $$x_k:=\overline{x}+\frac{1}{k}yto\overline{x},\qquad y_k\in k(x_k-P_{\mathcal{K}_s}(x_k)).$$
 Then, by noting \eqref{eq:compl}, for any $k\in\mathbb{N}$, we have $P_{\mathcal{K}_s}(x_k)=\{\overline{x}\}$ thus  $$y_k=k\left(\left(\overline{x}+\frac{y}{k}\right)-\overline{x}\right)=y\in N_{\mathcal{K}_s}(\overline{x}).$$
  
  For the other case, suppose $\|y\|_0\leq m-s$. Then, using \eqref{eq:compl}, we see that there exists an index set $\mathbb{J}_0\in\mathcal{J}_s(\overline{x})$ such that $y|_{\mathbb{J}_0}=0$. Let $w\in\mathbb{R}^m$ be the vector whose entries are $1$ on $\mathbb{J}_0$, and $0$ otherwise. Define sequences $(x_k)$ and $(y_k)$ by
  $$x_k:=\overline{x}+\frac{1}{k}y+\frac{1}{\sqrt{k}}w\to\overline{x},\qquad y_k\in k(x_k-P_{\mathcal{K}_s}(x_k)).$$
 Since $1/k\to 0$ at a faster rate than $1/\sqrt{k}\to0$, there exists a sufficiently large $K$ such that for $k>K$,
  $$\min_{j\in\mathbb{J}_0}\left\{\overline{x}_j+\frac{1}{\sqrt{k}}w_j\right\} > \frac{1}{k}\max_{j\not\in\mathbb{J}_0}\left\{y_j\right\}.$$
  Hence for $k>K$ we have  $P_{\mathcal{K}_s}(x_k)=\{\overline{x}+w/\sqrt{k}\}$, and thus that
   $$y_k=k\left(\left(\overline{x}+\frac{1}{k}y+\frac{1}{\sqrt{k}}w\right)-\left(\overline{x}+\frac{1}{\sqrt{k}}w\right)\right)=y\in N_{\mathcal{K}_s}(\overline{x}).$$
  This completes the proof.
\end{proof}

For convex sets the convex and Mordukhovich normal cones coincide \cite[Ch.~1]{mordukhovich2006generalized}. As an immediate consequence of Theorem~\ref{th:NKs} we recover the following well-known result.

\begin{corollary}[Normal cone to $\mathbb{R}^m_+$]\label{cor:NR+s}
 The normal cone to the convex set $\mathbb{R}^m_+$ at the point $\overline{x}\in\mathbb{R}^m_+$ is given by
  $$N_{\mathbb{R}^m_+}(\overline{x})=N_{\mathbb{R}^m_+}^{\rm conv}(\overline{x})=\{y\in\mathbb{R}^m:\overline{x}\odot y=0,\,y\leq 0\}.$$
\end{corollary}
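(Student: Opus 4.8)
The plan is to apply Theorem~\ref{th:NKs} with the specific choice $s=m$. The key observation is that $\mathcal{K}_m=\{x\in\mathbb{R}^m_+:\|x\|_0\leq m\}=\mathbb{R}^m_+$, since every vector in $\mathbb{R}^m$ has at most $m$ nonzero coordinates, so the sparsity constraint becomes vacuous and only the non-negativity constraint remains. Thus $N_{\mathbb{R}^m_+}(\overline{x})=N_{\mathcal{K}_m}(\overline{x})$, and the formula from the theorem becomes directly applicable.

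Substituting $s=m$ into the expression furnished by Theorem~\ref{th:NKs} gives
$$N_{\mathbb{R}^m_+}(\overline{x})=\{y:\overline{x}\odot y=0,\,y\leq 0\}\cup\{y:\overline{x}\odot y=0,\,\|y\|_0\leq 0\}.$$
The second set simplifies because the condition $\|y\|_0\leq m-s=0$ forces $y=0$; and since $\overline{x}\odot 0=0$ and $0\leq 0$, the zero vector already belongs to the first set. Hence the union collapses to the first set alone, yielding $N_{\mathbb{R}^m_+}(\overline{x})=\{y:\overline{x}\odot y=0,\,y\leq 0\}$.

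Finally, to obtain the remaining equality $N_{\mathbb{R}^m_+}(\overline{x})=N^{\rm conv}_{\mathbb{R}^m_+}(\overline{x})$, I would invoke the convexity of $\mathbb{R}^m_+$: since $\mathbb{R}^m_+$ is closed and convex, Fact~\ref{fact:normalInclusion}(c) guarantees that the convex, proximal, and Mordukhovich normal cones all coincide, chaining everything together into the full statement. There is no genuine obstacle here — the only point requiring a moment's care is checking that the second set in the $s=m$ specialization is absorbed into the first rather than contributing anything new, which is immediate from the containment $0\in\{y:\overline{x}\odot y=0,\,y\leq 0\}$.
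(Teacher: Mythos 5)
Your proposal is correct and follows essentially the same route as the paper: identify $\mathbb{R}^m_+=\mathcal{K}_m$, apply Theorem~\ref{th:NKs} with $s=m$, and note that the second set collapses to $\{0\}$, which is absorbed into the first. The paper leaves the absorption of $\{0\}$ and the appeal to convexity (Fact~\ref{fact:normalInclusion}(c)) implicit, whereas you spell them out --- no substantive difference.
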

\begin{proof}
 Note that $\mathbb{R}_+^m=\mathcal{K}_m$ and $\left\{y\in\mathbb{R}^m:\overline{x}\odot y=0,\,\|y\|_0\leq 0\right\}=\{0\}$. The result now follows by applying Theorem~\ref{th:NKs} with $s=m$.
\end{proof}

The following remark sheds light on the two sets in the normal cone formula of Theorem~\ref{th:NKs}.

\begin{remark}[$N_{\mathcal{K}_s}$ is the union of two normal cones]\label{re:NKs}
 The Mordukhovich normal to sparsity set $\mathcal{A}_s$ is given by (see \cite[Th.~3.9]{restrictedNormalConesAffine})
  \begin{equation}\label{eq:NAs}
   N_{\mathcal{A}_s}(\overline{x})=\left\{y\in\mathbb{R}^m:\overline{x}\odot y=0,\,\|y\|_0\leq m-s\right\}.
  \end{equation}
 Combining with Corollary~\ref{cor:NR+s}, Theorem~\ref{th:NKs} can be expressed
  $$N_{\mathcal{K}_s}(\overline{x})=N^{\rm conv}_{\mathbb{R}^m_+}(\overline{x})\cup N_{\mathcal{A}_s}(\overline{x}).$$
 That is, $N_{\mathcal{K}_s}$ is the union of the convex normal cone to the the non-negativity set $\mathbb{R}^m_+$, and the Mordukhovich normal cone to the sparsity set $\mathcal{A}_s$. 
\qede\end{remark}

\begin{remark}\label{re:failure of intersection rule}
Note that formulae for $N_{\mathbb{R}^m_+}$ and  $N_{\mathcal{A}_s}$ are known, and that $\mathcal{K}_s=\mathcal{A}_s\cap\mathbb{R}^m_+$. Nevertheless, it is not possible to obtain the the normal cone $N_{K_s}$ using the standard \emph{intersection rule} \cite[\S3.1.1]{mordukhovich2006generalized} applied to $N_{\mathcal{A}_s}$ and $N_{\mathbb{R}^m_+}$ since the basic qualification condition $N_{\mathcal{A}_s}(\overline{x})\cap(-N_{\mathbb{R}^m_+})(\overline{x})=\{0\}$ is not satisfied.
\qede\end{remark}

Around points of maximal sparsity the set $\mathcal{K}_s$ is locally indistinguishable from the sparsity set $\mathcal{A}_s$. In this case, the formula for the normal cone simplifies accordingly.
\begin{corollary}[Points of maximal sparsity]\label{cor:NKsMaximal}
 The Mordukhovich normal cone to the set $\mathcal{K}_s$ at a point $\overline x\in\mathcal{K}_s$ having $\|x\|_0=s$ is given by
  $$N_{\mathcal{K}_s}(\overline{x})=N_{\mathcal{A}_s}(\overline{x})=\left\{y\in\mathbb{R}^m:\overline{x}\odot y=0\right\}.$$
\end{corollary}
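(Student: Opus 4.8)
The plan is to derive everything directly from the formula in Theorem~\ref{th:NKs}, exploiting the special structure imposed by maximal sparsity. The key observation I would make first is that the support condition $\overline{x}\odot y=0$ forces $y_j=0$ for every index $j\in\Ind(\overline{x})$. When $\|\overline{x}\|_0=s$, the support $\Ind(\overline{x})$ has exactly $s$ elements, so any $y$ satisfying $\overline{x}\odot y=0$ must vanish on these $s$ coordinates and can therefore be non-zero on at most the remaining $m-s$ coordinates. In other words, the condition $\overline{x}\odot y=0$ \emph{automatically} implies $\|y\|_0\leq m-s$ in this regime.

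With that in hand, I would examine the two sets appearing in the union of Theorem~\ref{th:NKs}. The second set,
$$\{y\in\mathbb{R}^m:\overline{x}\odot y=0,\,\|y\|_0\leq m-s\},$$
simplifies: since the sparsity constraint is redundant by the observation above, this set equals $\{y\in\mathbb{R}^m:\overline{x}\odot y=0\}$. The first set, $\{y\in\mathbb{R}^m:\overline{x}\odot y=0,\,y\leq 0\}$, is plainly a subset of $\{y\in\mathbb{R}^m:\overline{x}\odot y=0\}$. Hence the union collapses to
$$N_{\mathcal{K}_s}(\overline{x})=\{y\in\mathbb{R}^m:\overline{x}\odot y=0\},$$
which is the desired first equality.

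For the second equality, I would invoke the formula for $N_{\mathcal{A}_s}$ recorded in \eqref{eq:NAs} of Remark~\ref{re:NKs}, namely $N_{\mathcal{A}_s}(\overline{x})=\{y:\overline{x}\odot y=0,\,\|y\|_0\leq m-s\}$. Applying the same redundancy argument, this too reduces to $\{y:\overline{x}\odot y=0\}$ when $\|\overline{x}\|_0=s$, giving $N_{\mathcal{A}_s}(\overline{x})=N_{\mathcal{K}_s}(\overline{x})$.

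I do not anticipate any genuine obstacle here: this is a direct specialization of the main theorem, and the only content is the elementary counting observation that an $m$-vector vanishing on an $s$-element support has at most $m-s$ non-zero entries. The one point worth stating carefully is precisely that this counting argument makes the sparsity constraint $\|y\|_0\leq m-s$ vacuous, which is exactly what decouples $N_{\mathcal{K}_s}$ from the non-negativity half-space contribution $N^{\rm conv}_{\mathbb{R}^m_+}(\overline{x})$ that was visible at non-maximal points.
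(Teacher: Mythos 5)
Your proposal is correct and follows essentially the same route as the paper: both observe that when $\|\overline{x}\|_0=s$ the condition $\overline{x}\odot y=0$ already forces $\|y\|_0\leq m-s$, so the second set in Theorem~\ref{th:NKs} becomes $\{y:\overline{x}\odot y=0\}$, the first set is contained in it, and the identification with $N_{\mathcal{A}_s}(\overline{x})$ follows from \eqref{eq:NAs}. Your write-up simply makes explicit the counting argument that the paper's proof leaves implicit.
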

\begin{proof}
 Since $\|x\|_0=s$ we have
  $\left\{y\in\mathbb{R}^m:\overline{x}\odot y=0,\,\|y\|_0\leq m-s\right\}=\{y:\overline{x}\odot y=0\}.$
 Observe,
  $$\left\{y\in\mathbb{R}^m:\overline{x}\odot y=0,\,y\leq 0\right\}\subseteq \left\{y\in\mathbb{R}^m:\overline{x}\odot y=0\right\}.$$
 The result now follows from Theorem~\ref{th:NKs} and \eqref{eq:NAs}.
\end{proof}

To conclude our study of the vector setting, we give a characterization of the proximal normal cone to $\mathcal{K}_s$, in terms of already introduced objects.

\begin{theorem}[Proximal normal cone to $\mathcal{K}_s$]\label{th:NproxKs}
 The proximal normal cone to $\mathcal{K}_s$ at the point $\overline{x}\in\mathcal{K}_s$ is given by
  $$N^{{\rm prox}}_{\mathcal{K}_s}(\overline{x})
      = \begin{cases}
         N_{\mathbb{R}^m_+}^{\rm conv}(\overline{x})=N_{\mathbb{R}^m_+}(\overline{x}), & \|\overline{x}\|_0<s, \\
         N_{\mathcal{A}_s}^{\rm prox}(\overline{x})=N_{\mathcal{A}_s}(\overline{x})=N_{\mathcal{K}_s}(\overline{x}),  & \|\overline{x}\|_0=s. \\
        \end{cases}$$
\end{theorem}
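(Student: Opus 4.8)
The plan is to treat the two cases separately, reducing the proximal normal cone in each to an object already at hand via the identity $N^{\rm prox}_{\mathcal{K}_s}(\overline x) = \mathbb{R}_+(P^{-1}_{\mathcal{K}_s}(\overline x) - \overline x)$. Everything hinges on the two descriptions of $P^{-1}_{\mathcal{K}_s}(\overline x)$ supplied by Proposition~\ref{prop:projKs}, which split precisely according to whether $\|\overline x\|_0$ is strictly less than, or equal to, $s$.

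In the case $\|\overline x\|_0 < s$, I would invoke Proposition~\ref{prop:projKs}\eqref{projiv}'s companion part, which states $P^{-1}_{\mathcal{K}_s}(\overline x) = P^{-1}_{\mathbb{R}^m_+}(\overline x)$ outright. Substituting into the definition of the proximal normal cone gives $N^{\rm prox}_{\mathcal{K}_s}(\overline x) = N^{\rm prox}_{\mathbb{R}^m_+}(\overline x)$, and since $\mathbb{R}^m_+$ is closed and convex, Fact~\ref{fact:normalInclusion}(c) collapses this to $N^{\rm conv}_{\mathbb{R}^m_+}(\overline x) = N_{\mathbb{R}^m_+}(\overline x)$. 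This settles the first branch with essentially no computation.

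In the case $\|\overline x\|_0 = s$, I would compute the cone directly from Proposition~\ref{prop:projKs}(e). Subtracting $\overline x$ from the set described there, a typical element $y = x - \overline x$ has $y_j = 0$ for $j \in \Ind(\overline x)$ (equivalently $\overline x \odot y = 0$), while for $j \notin \Ind(\overline x)$ the entry $y_j = x_j$ is unconstrained except for the threshold $\max_{j \notin \Ind(\overline x)} y_j^+ \leq c$, where $c := \min_{j \in \Ind(\overline x)} \overline x_j$. The decisive observation is that $\|\overline x\|_0 = s$ forces $c > 0$ strictly. Hence for any $y$ with $\overline x \odot y = 0$, a sufficiently small positive multiple $ty$ meets the threshold, so $y$ lies in the $\mathbb{R}_+$-cone; the reverse containment holds since scaling preserves $\overline x \odot y = 0$. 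This yields $N^{\rm prox}_{\mathcal{K}_s}(\overline x) = \{y : \overline x \odot y = 0\}$, which by Corollary~\ref{cor:NKsMaximal} coincides with $N_{\mathcal{K}_s}(\overline x) = N_{\mathcal{A}_s}(\overline x)$.

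To close the remaining chain of equalities I still need $N^{\rm prox}_{\mathcal{A}_s}(\overline x) = N_{\mathcal{A}_s}(\overline x)$. One inclusion is free from Fact~\ref{fact:normalInclusion}(b); for the reverse I would rerun the $\mathbb{R}_+$-cone computation above using the $\mathcal{A}_s$-analogue of Proposition~\ref{prop:projKs}(e) (available in \cite{restrictedNormalConesAffine}, with the sign threshold replaced by $\max_{j \notin \Ind(\overline x)} |x_j| \leq \min_{j \in \Ind(\overline x)} |\overline x_j|$), where the strict positivity of the threshold again does all the work. I expect this direct cone computation to be the main obstacle: the crux is verifying that the $\mathbb{R}_+$-closure of $P^{-1}_{\mathcal{K}_s}(\overline x) - \overline x$ fills out the entire complementarity subspace $\{y : \overline x \odot y = 0\}$, and it is exactly the strict inequality $c > 0$—which fails once $\|\overline x\|_0 < s$—that separates the two branches of the statement.
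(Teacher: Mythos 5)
Your proof is correct, and on the first branch it is exactly the paper's argument: Proposition~\ref{prop:projKs}(f) gives $P^{-1}_{\mathcal{K}_s}(\overline{x})=P^{-1}_{\mathbb{R}^m_+}(\overline{x})$, so the proximal normal cones coincide, and Fact~\ref{fact:normalInclusion}(c) finishes. On the maximal-sparsity branch you diverge from the paper, which disposes of this case purely by citation: Corollary~\ref{cor:NKsMaximal} combined with \cite[Prop.~3.8]{restrictedNormalConesAffine} (the proximal normal cone to $\mathcal{A}_s$ at points of maximal sparsity). You instead compute $N^{\rm prox}_{\mathcal{K}_s}(\overline{x})=\{y:\overline{x}\odot y=0\}$ directly from the preimage formula in Proposition~\ref{prop:projKs}(e) --- any $y$ in the complementarity subspace has a small positive multiple $ty$ clearing the threshold $c=\min_{j\in\Ind(\overline{x})}\overline{x}_j>0$, while conversely every element of $P^{-1}_{\mathcal{K}_s}(\overline{x})-\overline{x}$ vanishes on $\Ind(\overline{x})$ --- and then you rerun the same computation with the $\mathcal{A}_s$-preimage to obtain $N^{\rm prox}_{\mathcal{A}_s}(\overline{x})=N_{\mathcal{A}_s}(\overline{x})$ rather than citing it. In effect you inline the proof of the external result the paper leans on. Your route is longer but self-contained, and it makes transparent the geometric reason the proximal cone is larger at maximal sparsity: the preimage of $\overline{x}$ absorbs \emph{all} sufficiently small perturbations supported off $\Ind(\overline{x})$, not merely the non-positive ones. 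The paper's version buys brevity at the cost of outsourcing exactly this point.

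One inaccuracy in your closing remark, which does not damage the argument: it is not the strict inequality $c>0$ that fails when $\|\overline{x}\|_0<s$ --- indeed $c=\min_{j\in\Ind(\overline{x})}\overline{x}_j>0$ whenever $\overline{x}\neq 0$, in either branch. What fails is the description (e) of the preimage itself: below maximal sparsity, part (f) applies instead, and $P^{-1}_{\mathcal{K}_s}(\overline{x})-\overline{x}$ contains no direction with a positive entry off the support, because a small positive bump off $\Ind(\overline{x})$ then projects to itself (the sparsity budget is not exhausted) rather than back to $\overline{x}$. Since your case analysis invokes the correct part of Proposition~\ref{prop:projKs} in each branch, this mislabelling of what separates the two cases is cosmetic.
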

\begin{proof}
 On one hand, if $\|\overline{x}\|_0<s$ then, by Proposition~\ref{prop:projKs}, $P^{-1}_{\mathcal{K}_s}(\overline{x})=P^{-1}_{\mathbb{R}^m_+}(\overline{x})$ implying that the corresponding proximal normal cones coincide. The claimed formula now follows by Fact~\ref{fact:normalInclusion}. On the other hand, if $\|\overline{x}\|_0=s$ the result follows from Corollary~\ref{cor:NKsMaximal} and \cite[Prop.~3.8]{restrictedNormalConesAffine}.
\end{proof}

\begin{remark}
 Theorem~\ref{th:NproxKs} shows that the proximal normal cone to a non-negative sparsity set does not capture all important features of the set at points which do not have maximal sparsity. In particular, the proximal normal cone coincides with the normal cone to the convex non-negative cone, whereas the corresponding Mordukhovich cone need not. Similar behavior is observed for the proximal normal cone to the sparsity set $\mathcal{A}_s$, which is equal to $\{0\}$ at points not having maximal sparsity \cite[Prop.~3.8]{restrictedNormalConesAffine}.
\qede\end{remark}

\subsection{Low-Rank Positive Semi-Definite Matrices}
Using the correspondence between symmetric and spectral functions, we now \emph{lift} our vector results to the larger space of symmetric matrices.

\begin{proposition}[Projection onto $\mathcal{S}_s$]\label{prop:projSs}
 Let $X\in\Sym^m$ and define
  $$\lambda^+_s(X):=(\lambda_1^+(X),\dots,\lambda_s^+(X),0,\dots,0).$$
The projection of $X$ onto $\mathcal{S}_s$ is given by 
  $$P_{\mathcal{S}_s}(X)=\left\{Y\in\Sym^m:Y=U^\trans(\diag\lambda^+_s(X))U,\,U\in\Orth^m(X)\right\}.$$
\end{proposition}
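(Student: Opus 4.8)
The plan is to realise $\mathcal{S}_s$ as a \emph{spectral} set and then read off the projection from Fact~\ref{fact:spectralProjections}. First I would establish the identification $\mathcal{S}_s=\lambda^{-1}(\mathcal{K}_s)$: a matrix $X\in\Sym^m$ satisfies $\lambda(X)\in\mathcal{K}_s$ precisely when $\lambda(X)\in\mathbb{R}^m_+$ (equivalently $X\succeq 0$) and $\|\lambda(X)\|_0\leq s$ (equivalently $\rank X\leq s$), which is exactly the defining condition of $\mathcal{S}_s$. Since permuting the entries of a vector preserves both non-negativity and the number of non-zero entries, $\mathcal{K}_s$ is symmetric (and closed, by Remark~\ref{re:Ks Ss are closed}), so Fact~\ref{fact:spectralProjections} applies and gives
 $$P_{\mathcal{S}_s}(X)=\left\{U^\trans(\diag y)U:y\in P_{\mathcal{K}_s}\lambda(X),\,U\in\Orth^m(X)\right\}.$$

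Next I would identify $P_{\mathcal{K}_s}(\lambda(X))$ via Proposition~\ref{prop:projKs}\eqref{projiv}. Because the eigenvalues in $\lambda(X)$ are listed in non-increasing order, the index set $\mathbb{J}=\{1,\dots,s\}$ always belongs to $\mathcal{J}_s(\lambda(X))$, and the corresponding projection vector is precisely $\lambda^+_s(X)=(\lambda_1^+(X),\dots,\lambda_s^+(X),0,\dots,0)$. Hence $\lambda^+_s(X)\in P_{\mathcal{K}_s}(\lambda(X))$, and the ``$\supseteq$'' inclusion of the claimed formula is immediate from the displayed identity above.

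The main obstacle is the reverse inclusion, i.e.\ showing that the union over \emph{all} $y\in P_{\mathcal{K}_s}(\lambda(X))$ collapses to the single vector $\lambda^+_s(X)$ once the orthogonal freedom in $\Orth^m(X)$ is exploited. The key observation is that any competing $y$, coming from some $\mathbb{J}\in\mathcal{J}_s(\lambda(X))$, can differ from $\lambda^+_s(X)$ only through ties: if $\mathbb{J}$ contains an index $j_2>s$ while omitting an index $j_1\leq s$, then the sorting gives $\lambda_{j_1}^+(X)\geq\lambda_{j_2}^+(X)$ while the defining inequality of $\mathcal{J}_s$ forces $\lambda_{j_2}^+(X)\geq\lambda_{j_1}^+(X)$, so $\lambda^+_{j_1}(X)=\lambda^+_{j_2}(X)$. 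When this common value is zero, $y$ and $\lambda^+_s(X)$ already carry zeros at the affected positions and hence agree there; when it is positive the two indices share the \emph{same} eigenvalue of $X$. Consequently $y=\sigma\,\lambda^+_s(X)$ for some permutation $\sigma\in\Perm^m$ that interchanges only indices carrying a common eigenvalue of $X$.

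It then remains to absorb $\sigma$ into $\Orth^m(X)$. Since $\sigma$ permutes only indices with equal eigenvalues, $\sigma(\diag\lambda(X))\sigma^\trans=\diag\lambda(X)$, from which $\sigma^\trans U\in\Orth^m(X)$ whenever $U\in\Orth^m(X)$. Using $\diag(\sigma v)=\sigma(\diag v)\sigma^\trans$, I would conclude
 $$U^\trans(\diag y)U=(\sigma^\trans U)^\trans(\diag\lambda^+_s(X))(\sigma^\trans U),$$
which lies in the claimed set, giving the ``$\subseteq$'' inclusion and completing the proof. The only delicate point throughout is this handling of the degenerate, tied-eigenvalue case; away from ties the projection vector is unique and the argument is routine.
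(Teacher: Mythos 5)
Your proof is correct and takes essentially the same approach as the paper, whose entire proof reads ``Follows from Fact~\ref{fact:spectralProjections} and Proposition~\ref{prop:projKs}.'' Your careful handling of tied eigenvalues---showing that every $y\in P_{\mathcal{K}_s}(\lambda(X))$ other than $\lambda^+_s(X)$ arises from a permutation that can be absorbed into $\Orth^m(X)$---fills in the one nontrivial detail that the paper leaves implicit.
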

\begin{proof}
 Follows from Fact~\ref{fact:spectralProjections} and Proposition~\ref{prop:projKs}.
\end{proof}

Our next main result is a novel characterization of the Mordukhovich normal cone to the set $\mathcal{S}_s$.

\begin{theorem}[Mordukhovich normal cone to $\mathcal{S}_s$]\label{th:NSs}
 The Mordukhovich normal cone to the set $\mathcal{S}_s$ at the point $\overline{X}\in\mathcal{S}_s$ is given by
   $$N_{\mathcal{S}_s}(\overline X)=\{Y\in\Sym^m:\overline XY=0,\,Y\preceq 0\}\cup   \{Y\in\Sym^m:\overline XY=0,\,\rank(Y)\leq m-s\}.$$
\end{theorem}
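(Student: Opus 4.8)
The plan is to realize $\mathcal{S}_s$ as a spectral set and invoke the transfer machinery already assembled. First I would observe that, since $\rank=\|\cdot\|_0\circ\lambda$ and the constraint $X\succeq 0$ is exactly the requirement $\lambda(X)\in\mathbb{R}^m_+$, we have the identity $\mathcal{S}_s=\lambda^{-1}(\mathcal{K}_s)$. The set $\mathcal{K}_s$ is closed by Remark~\ref{re:Ks Ss are closed} and symmetric, since both $\mathbb{R}^m_+$ and the level set $\{\|\cdot\|_0\leq s\}$ are invariant under coordinate permutations. Hence Fact~\ref{fact:normalSpectralSet} applies and yields, for $\overline X\in\mathcal{S}_s$,
$$N_{\mathcal{S}_s}(\overline X)=\left\{U^\trans(\diag y)U:y\in N_{\mathcal{K}_s}(\lambda(\overline X)),\ U\in\Orth^m(\overline X)\right\}.$$
Substituting the vector formula of Theorem~\ref{th:NKs} for $N_{\mathcal{K}_s}(\lambda(\overline X))$ then reduces the problem to a dictionary between vector and matrix conditions.

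Next I would establish the three translation rules, all exploiting that any $U\in\Orth^m(\overline X)$ simultaneously diagonalizes $\overline X$ and $Y=U^\trans(\diag y)U$, with $\overline X=U^\trans(\diag\lambda(\overline X))U$. Under this representation $\overline X Y=U^\trans\diag(\lambda(\overline X)\odot y)U$, so $\overline X Y=0$ if and only if $\lambda(\overline X)\odot y=0$; the entries of $y$ are precisely the eigenvalues of $Y$, so $y\leq 0$ if and only if $Y\preceq 0$, and $\|y\|_0=\rank(Y)$. These identities immediately give the inclusion of the spectral-form set into the claimed set: any $Y=U^\trans(\diag y)U$ arising from Fact~\ref{fact:normalSpectralSet} satisfies $\overline X Y=0$, and the two alternatives $y\leq 0$ and $\|y\|_0\leq m-s$ become $Y\preceq 0$ and $\rank(Y)\leq m-s$.

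For the reverse inclusion I would start from a matrix $Y$ in the claimed set and reconstruct a valid pair $(y,U)$. The key observation is that $\overline X Y=0$ forces $\overline X$ and $Y$ to commute: since both are symmetric, $Y\overline X=(\overline X Y)^\trans=0$, whence $\overline X Y=Y\overline X=0$. Commuting symmetric matrices admit a common orthonormal eigenbasis, and by ordering this basis so that the eigenvalues of $\overline X$ are non-increasing I obtain $U\in\Orth^m(\overline X)$ diagonalizing both matrices; set $y$ to be the corresponding diagonal of $Y$. The same three identities then run in reverse: $\overline X Y=0$ gives $\lambda(\overline X)\odot y=0$, while $Y\preceq 0$ (resp. $\rank(Y)\leq m-s$) gives $y\leq 0$ (resp. $\|y\|_0\leq m-s$), placing $y$ in the appropriate component of $N_{\mathcal{K}_s}(\lambda(\overline X))$ by Theorem~\ref{th:NKs}. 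Hence $Y=U^\trans(\diag y)U$ lies in the spectral-form set.

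I expect the simultaneous-diagonalization step to be the only genuine subtlety: one must check that the common eigenbasis can be chosen to order the eigenvalues of $\overline X$ non-increasingly, so that $U$ genuinely belongs to $\Orth^m(\overline X)$, and that no further reordering within repeated-eigenvalue blocks is needed. The latter holds here because the defining conditions $\lambda(\overline X)\odot y=0$ together with either $y\leq 0$ or $\|y\|_0\leq m-s$ are preserved under any intra-block permutation of $y$. Everything else is the routine conjugation bookkeeping recorded in the translation rules.
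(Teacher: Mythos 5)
Your proposal is correct and follows essentially the same route as the paper: combine Fact~\ref{fact:normalSpectralSet} with Theorem~\ref{th:NKs} to obtain the spectral-form expression, note the forward inclusion is routine, and for the reverse inclusion use that $\overline{X}Y=0$ forces commutation, hence a simultaneous spectral decomposition ordered for $\overline{X}$, after which the conditions translate coordinatewise. If anything, your write-up is slightly more careful than the paper's, which leaves the commutation argument ($Y\overline{X}=(\overline{X}Y)^\trans=0$) and the intra-block ordering point implicit.
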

\begin{proof}
	First observe that, by combining Theorem~\ref{th:NKs} and Fact~\ref{fact:normalSpectralSet}, we obtain
	\begin{equation}\label{eq:NSs inter}\begin{split}
		N_{\mathcal{S}_s}(\overline{X}) =&\left\{U^\trans \diag(y)U:\lambda(\overline{X})\odot y=0,\,y\leq 0,\,U\in\Orth^m(X)\right\}\\
		&\quad\cup\left\{U^\trans \diag(y)U:\lambda(\overline{X})\odot y=0,\,\|y\|_0\leq m-s,\,U\in\Orth^m(X)\right\}.
	\end{split}\end{equation} 
	 To complete the proof, we show that \eqref{eq:NSs inter} is equal to the claimed formula. Since other inclusion is easily deduced, we only prove that any matrix $Y$ satisfying the proposed formula is contained in \eqref{eq:NSs inter}. To this end, consider $Y\in\Sym^m$ with $\overline{X}Y=0$. Then $\overline{X}$ and $Y$ commute, and hence have a simultaneously spectral decomposition which we assume, without loss of generality, is an ordered spectral decomposition for $\overline{X}$. That is, there exists $U\in\Orth^m(\overline{X})$ such that 
	 $$Y=U^\trans\diag(y)U\text{ for some }y\in\mathbb{R}^m.$$
	Furthermore, since $U\in\Orth^m$, we have that
	  \begin{align*}
	    \diag(\lambda(\overline{X})\odot y) 
	    &= \diag(\lambda(\overline{X}))\diag(y) \\
	    &= U \left(U^\trans\diag(\lambda(\overline{X}))U\right)\left(U^\trans \diag(y)U\right) U^\trans \\
	    &= U\overline{X}YU^\trans =0,
	  \end{align*}
	which implies that $\lambda(X)\odot y=0$. To complete the proof, we note that $Y\preceq 0$ implies $y\leq 0$ and that $\rank Y\leq m-s$ implies $\|y\|_0\leq m-s$.
\end{proof}

As before, we deduce consequences of Theorem~\ref{th:NSs}. The first is the normal cone to the set of positive semi-definite matrices. This can be found, for example, in \cite{malick2012fresh}.
\begin{corollary}[Normal cone to $\Sym^m_+$]\label{cor:NS+}
 The normal cone to the set $\Sym^m_+$ at a point $\overline{X}\in\Sym^m_+$ is given by
  $$N_{\Sym^m_+}(\overline{X})=N_{\Sym^m_+}^{\rm conv}(\overline{X})=\{Y\in\Sym^+:\overline{X}Y=0,\,Y\preceq 0\}.$$
\end{corollary}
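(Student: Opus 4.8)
The plan is to derive Corollary~\ref{cor:NS+} as an immediate specialization of Theorem~\ref{th:NSs}, exactly mirroring how Corollary~\ref{cor:NR+s} was deduced from Theorem~\ref{th:NKs} in the vector setting. The key observation is that $\Sym^m_+ = \mathcal{S}_m$, since a positive semi-definite $m\times m$ matrix automatically satisfies the (vacuous) rank constraint $\rank(X)\leq m$. So I would apply Theorem~\ref{th:NSs} with the choice $s=m$.

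**First I would** record that with $s=m$ the second set in the union collapses to the trivial cone. Indeed, the condition $\rank(Y)\leq m-s = 0$ forces $Y=0$, so
\begin{equation*}
 \{Y\in\Sym^m:\overline{X}Y=0,\,\rank(Y)\leq m-m\}=\{0\}.
\end{equation*}
Since $0$ is contained in the first set (as $0\preceq 0$ and $\overline{X}\cdot 0 = 0$), the union in Theorem~\ref{th:NSs} reduces to just the first set, giving
\begin{equation*}
 N_{\mathcal{S}_m}(\overline{X})=\{Y\in\Sym^m:\overline{X}Y=0,\,Y\preceq 0\}.
\end{equation*}
This establishes the Mordukhovich normal cone formula.

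**Next I would** justify that the Mordukhovich cone coincides with the convex normal cone. Since $\Sym^m_+$ is a closed convex set (it is a closed convex cone), Fact~\ref{fact:normalInclusion}(c) applies and gives $N^{\rm conv}_{\Sym^m_+}(\overline{X})=N_{\Sym^m_+}(\overline{X})$. Combining this with the computation above yields the claimed chain of equalities, completing the proof.

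**There is no serious obstacle here**, as the result is a direct corollary; the only point requiring a word of care is the identification $\Sym^m_+=\mathcal{S}_m$ and the convexity of $\Sym^m_+$ invoked through Fact~\ref{fact:normalInclusion}(c). One cosmetic note: the corollary statement writes the ambient set as $\Sym^+$ in the normal cone formula, which appears to be a typo for $\Sym^m$, and I would present the proof using the correct $\Sym^m$.
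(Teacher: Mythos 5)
Your proposal is correct and follows essentially the same route as the paper: identify $\Sym^m_+=\mathcal{S}_m$, apply Theorem~\ref{th:NSs} with $s=m$ so that the second set in the union collapses to $\{0\}$, and invoke closed convexity (Fact~\ref{fact:normalInclusion}(c)) for the equality with the convex normal cone. Your explicit handling of the convex-cone equality and your flagging of the $\Sym^+$ typo are minor refinements of, not departures from, the paper's argument.
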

\begin{proof}
 Note that $\Sym^m_+=\mathcal{S}_s$ and $\{Y\in\Sym^m:\overline XY=0,\,\rank(Y)\leq 0\}=\{0\}$. The result now follows from Theorem~\ref{th:NSs}  with $s=m$.
\end{proof}

Denote the set of low-rank symmetric matrices by
 $$\mathcal{R}_s:=\left\{X\in\Sym^m:\rank(X)\leq s\right\}.$$
The following proposition is a characterization of the Mordukhovich normal cone to $\mathcal{R}_s$.
\begin{proposition}[Mordukhovich normal cone to $\mathcal{R}_s$]\label{prop:NRs}
 The Mordukhovich normal cone to the set $\mathcal{R}_s$ at a $\overline{X}\in\mathcal{R}_s$ having $\rank\overline{X}=s$ is given by
   $$N_{\mathcal{R}_s}(\overline{X})=\{Y\in\Sym^m:\overline XY=0\}.$$
\end{proposition}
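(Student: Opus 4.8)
The plan is to exploit that $\mathcal{R}_s$ is a \emph{spectral} set and reduce to the vector result already established for $\mathcal{A}_s$, mirroring the proof of Theorem~\ref{th:NSs} but with $\mathcal{A}_s$ in place of $\mathcal{K}_s$. Since the number of nonzero eigenvalues of a symmetric matrix equals its rank, we have $\rank=\|\cdot\|_0\circ\lambda$, and therefore $\mathcal{R}_s=\lambda^{-1}(\mathcal{A}_s)$. The symmetric set $\mathcal{A}_s$ is closed (a lower-level set of the lower semi-continuous functional $\|\cdot\|_0$), so Fact~\ref{fact:normalSpectralSet} applies and expresses $N_{\mathcal{R}_s}(\overline{X})$ in terms of $N_{\mathcal{A}_s}(\lambda(\overline{X}))$ together with the orthogonal matrices $\Orth^m(\overline{X})$ diagonalizing $\overline{X}$.

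First I would evaluate the vector normal cone at $\lambda(\overline{X})$. The hypothesis $\rank\overline{X}=s$ translates to $\|\lambda(\overline{X})\|_0=s$, so $\lambda(\overline{X})$ is a point of \emph{maximal sparsity} for $\mathcal{A}_s$. Here the constraint $\lambda(\overline{X})\odot y=0$ already forces $y$ to be supported in the complement of $\Ind(\lambda(\overline{X}))$, a set of size $m-s$, so the requirement $\|y\|_0\leq m-s$ appearing in \eqref{eq:NAs} is automatic. Thus, by Corollary~\ref{cor:NKsMaximal},
$$N_{\mathcal{A}_s}(\lambda(\overline{X}))=\{y\in\mathbb{R}^m:\lambda(\overline{X})\odot y=0\}.$$
Substituting this into Fact~\ref{fact:normalSpectralSet} yields
$$N_{\mathcal{R}_s}(\overline{X})=\{U^\trans(\diag y)U:\lambda(\overline{X})\odot y=0,\,U\in\Orth^m(\overline{X})\}.$$

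It remains to show this spectral description coincides with $\{Y\in\Sym^m:\overline{X}Y=0\}$, which I would carry out exactly as in Theorem~\ref{th:NSs}. The forward inclusion is a direct computation: if $Y=U^\trans(\diag y)U$ with $U\in\Orth^m(\overline{X})$ and $\lambda(\overline{X})\odot y=0$, then $\overline{X}Y=U^\trans\diag(\lambda(\overline{X})\odot y)U=0$. For the reverse inclusion, given symmetric $Y$ with $\overline{X}Y=0$, symmetry forces $Y\overline{X}=(\overline{X}Y)^\trans=0$, so $\overline{X}$ and $Y$ commute and admit a simultaneous spectral decomposition; choosing it ordered for $\overline{X}$ produces $U\in\Orth^m(\overline{X})$ and $y$ with $Y=U^\trans(\diag y)U$, and conjugating $\overline{X}Y=0$ back to diagonal form gives $\diag(\lambda(\overline{X})\odot y)=U\overline{X}YU^\trans=0$, whence $\lambda(\overline{X})\odot y=0$. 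The only genuine subtlety is this reverse inclusion, namely justifying the simultaneous diagonalization from $\overline{X}Y=0$; the maximal-rank hypothesis is precisely what removes the rank and sign bookkeeping that complicates the analogous step for $N_{\mathcal{S}_s}$, so the present translation is in fact cleaner than that of Theorem~\ref{th:NSs}.
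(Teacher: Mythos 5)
Your proposal is correct and takes essentially the same approach as the paper, whose entire proof is the one-liner ``Follows from \eqref{eq:NAs} and Fact~\ref{fact:normalSpectralSet}'': you spell out exactly those two ingredients together with the spectral-to-matrix translation (simultaneous diagonalization from $\overline{X}Y=0$) borrowed from the proof of Theorem~\ref{th:NSs}. One small inaccuracy: Corollary~\ref{cor:NKsMaximal} concerns points of $\mathcal{K}_s$, and $\lambda(\overline{X})$ may have negative entries since $\overline{X}\in\mathcal{R}_s$ need not be positive semi-definite, but this is harmless because your own justification---that $\lambda(\overline{X})\odot y=0$ makes the constraint $\|y\|_0\leq m-s$ in \eqref{eq:NAs} automatic at a point of maximal sparsity---is correct and self-contained.
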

\begin{proof}
 Follows from \eqref{eq:NAs} and Fact~\ref{fact:normalSpectralSet}.
\end{proof}

\begin{remark}
 A formula for the Mordukhovich normal cone to the set of low-rank (possibly rectangular) real matrices is derived in \cite{luke2013prox}. This formula cannot be applied in our context even, when specialized to the square case, for the same reasons discussed in Remark~\ref{re:failure of intersection rule}
\qede\end{remark}

As was the case in the vector setting, here we are also able to express the normal cone to $\mathcal{S}_s$ in terms of the normal cones of its two `building blocks'.

\begin{remark}[$N_{\mathcal{S}_s}$ is the union of two normal cones]
 In light of Theorem~\ref{th:NSs}, Corollary~\ref{cor:NS+} and Proposition~\ref{prop:NRs}, the Mordukhovich normal cone to $\mathcal{S}_s$ can be expressed as
   $$N_{\mathcal{S}_s}(\overline{X})=N^{\rm conv}_{\Sym^m_+}(\overline{X})\cup N_{\mathcal{R}_s}(\overline{X}).$$
 That is, $N_{\mathcal{S}_s}$ is the union of the convex normal cone to the positive semi-definite matrices, and the Mordukhovich normal cone to the low-rank set $\mathcal{R}_s$.
\qede\end{remark}

A characterization of the proximal normal cone to $\mathcal{S}_s$ can also be given. As was the case in the vector setting, it also does not adequately describe the geometry of $\mathcal{S}_s$.

\begin{theorem}[Proximal normal cone to $\mathcal{S}_s$]
 The proximal normal cone to $\mathcal{S}_s$ at the point $\overline{X}\in\mathcal{K}_s$ is given by
  $$ N^{{\rm prox}}_{\mathcal{S}_s}(\overline{X})
      = \begin{cases}
         N_{\Sym^m_+}^{\rm conv}\left(\overline{X}\right)=N_{\Sym^m_+}\left(\overline{X}\right),       & \rank(\overline{X})<s, \\
         N_{\mathcal{R}_s}^{\rm prox}\left(\overline{X}\right)\hspace{0.15ex}= N_{\mathcal{R}_s}\left(\overline{X}\right),  & \rank(\overline{X})=s. \\
        \end{cases}$$
\end{theorem}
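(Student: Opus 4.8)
The plan is to mirror exactly the structure of the proof of Theorem~\ref{th:NproxKs} in the vector setting, since the two proximal normal cone results have an identical case split on the rank/sparsity of the base point. I would split into the two cases $\rank(\overline X)<s$ and $\rank(\overline X)=s$ and treat each by reducing to an already-established identification of $P^{-1}_{\mathcal{S}_s}(\overline X)$ together with the inclusions of Fact~\ref{fact:normalInclusion}.

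First, for the case $\rank(\overline X)<s$: I would argue that the preimage $P^{-1}_{\mathcal{S}_s}(\overline X)$ coincides with $P^{-1}_{\Sym^m_+}(\overline X)$. The natural way to see this is via Fact~\ref{fact:spectralProjections} (projection onto spectral sets) applied to $P_{\mathcal{S}_s}$ as given in Proposition~\ref{prop:projSs}, pushing the corresponding vector-level statement Proposition~\ref{prop:projKs}(f) (which says precisely $P^{-1}_{\mathcal{K}_s}(y)=P^{-1}_{\mathbb{R}^m_+}(y)$ when $\|y\|_0<s$) through the eigenvalue correspondence. Once the preimages agree, the proximal normal cones $N^{\rm prox}_{\mathcal{S}_s}(\overline X)$ and $N^{\rm prox}_{\Sym^m_+}(\overline X)$ agree by definition ($N^{\rm prox}_\Omega(\overline x)=\mathbb{R}_+(P^{-1}_\Omega(\overline x)-\overline x)$), and then Fact~\ref{fact:normalInclusion}(c) upgrades this to equality with the full convex and Mordukhovich normal cones since $\Sym^m_+$ is closed and convex. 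This yields the first line of the claimed formula.

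Second, for the case $\rank(\overline X)=s$: here $\overline X$ is a point of maximal rank, so $\mathcal{S}_s$ is locally indistinguishable from the low-rank set $\mathcal{R}_s$ near $\overline X$, exactly as Corollary~\ref{cor:NKsMaximal} captured in the vector setting. I would invoke the spectral-lifting machinery of Fact~\ref{fact:normalSpectralSet} (which explicitly states that the proximal analogue also holds) applied to the corresponding vector statement, namely the $\|\overline x\|_0=s$ branch of Theorem~\ref{th:NproxKs}, to obtain $N^{\rm prox}_{\mathcal{S}_s}(\overline X)=N^{\rm prox}_{\mathcal{R}_s}(\overline X)=N_{\mathcal{R}_s}(\overline X)$. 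Combining with Proposition~\ref{prop:NRs} then gives the second line.

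The main obstacle I anticipate is the spectral lifting of the proximal normal cone statements, rather than the Mordukhovich ones. Fact~\ref{fact:spectralProjections} and Fact~\ref{fact:normalSpectralSet} are stated for projections and for the (Mordukhovich, with a remark covering proximal) normal cone of a spectral set, so I must make sure the preimage identification $P^{-1}_{\mathcal{S}_s}(\overline X)=P^{-1}_{\Sym^m_+}(\overline X)$ genuinely transfers; the subtlety is that the preimage is a union over orthogonal matrices $U\in\Orth^m(\overline X)$, so one must verify that the diagonal-to-spectral correspondence respects the preimage and not merely the forward projection. The cleanest route is to lean on the final sentence of Fact~\ref{fact:normalSpectralSet} asserting the proximal result holds, and on the already-lifted Proposition~\ref{prop:NRs}, so that the heavy lifting has been done once and for all; the argument then reduces to bookkeeping that parallels the vector proof essentially verbatim.
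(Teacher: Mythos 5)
Your proposal is correct and in the end coincides with the paper's proof, which is exactly the one-line reduction you call the ``cleanest route'': apply the proximal-normal-cone version of Fact~\ref{fact:normalSpectralSet} to lift the two cases of Theorem~\ref{th:NproxKs}, identifying the lifted cones via $\lambda^{-1}(\mathbb{R}^m_+)=\Sym^m_+$ and $\lambda^{-1}(\mathcal{A}_s)=\mathcal{R}_s$ (Corollary~\ref{cor:NS+} and Proposition~\ref{prop:NRs}). The preimage identification $P^{-1}_{\mathcal{S}_s}(\overline{X})=P^{-1}_{\Sym^m_+}(\overline{X})$ you sketch for the $\rank(\overline{X})<s$ case is a workable but unnecessary detour; the paper does not redo the vector argument at the matrix level.
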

\begin{proof}
 Follows from Fact~\ref{fact:normalSpectralSet} and Theorem~\ref{th:NproxKs}.
\end{proof}

\section{Regularity Properties}\label{sec:regular}
 We now investigate regularity properties of non-negative sparsity sets, and collections of sets containing non-negative sparsity sets. We first recall some definitions.

A family $\{\Omega_1,\Omega_2,\dots,\Omega_n\}$ of closed non-empty subsets of $\E$ is \emph{strongly regular} at a point $\overline{x}\in\bigcap_{k=1}^n\Omega_k$ if $y_1=y_2=\dots=y_m=0$ is the only solution to the system
 $$\sum_{k=1}^my_k=0,\quad y_k\in N_{\Omega_k}(\overline{x}).$$
Specialized to a collection of two sets, strong regularity can be expressed as the transversality condition
 \begin{equation}\label{eq:strong regularity two sets}
    N_{\Omega_1}(\overline{x})\cap\left(-N_{\Omega_2}(\overline{x})\right)=\{0\}.
 \end{equation}
 The normal cone formulae in Theorems~\ref{th:NKs} and \ref{th:NSs} provide conditions for strongly regularity of intersections involving non-negative sparsity sets. These condition, which we derive below, are analogous to those given by \cite[Prop.~3.8]{luke2013prox} for rank constraints not requiring non-negativity.

\begin{proposition}[Strong regularity for intersections with $\mathcal{K}_s$]\label{prop:strongRegKs}
 Let $\Omega\subseteq\mathbb{R}^m$ be closed and $\overline{x}\in\Omega\cap\mathcal{K}_s$. Then $\{\Omega,\mathcal{K}_s\}$ is strongly regular at $\overline{x}$ if and only if for all non-zero $y\in N_{\Omega}(\overline{x})$ either (a) $\overline{x}\odot y\neq 0$, or (b) $y\not\geq 0$ and $\|y\|_0>m-s$.
\end{proposition}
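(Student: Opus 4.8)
The plan is to unravel the definition of strong regularity for two sets, stated in \eqref{eq:strong regularity two sets}, and then substitute the explicit formula for $N_{\mathcal{K}_s}(\overline{x})$ from Theorem~\ref{th:NKs}. Strong regularity of $\{\Omega,\mathcal{K}_s\}$ at $\overline{x}$ means
\begin{equation*}
 N_{\Omega}(\overline{x})\cap\bigl(-N_{\mathcal{K}_s}(\overline{x})\bigr)=\{0\}.
\end{equation*}
Equivalently, the \emph{only} way a non-zero $y\in N_{\Omega}(\overline{x})$ can fail the regularity condition is if $-y$ also lies in $N_{\mathcal{K}_s}(\overline{x})$; so strong regularity is exactly the statement that no non-zero $y\in N_{\Omega}(\overline{x})$ has $-y\in N_{\mathcal{K}_s}(\overline{x})$. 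The proof is therefore a matter of negating the membership $-y\in N_{\mathcal{K}_s}(\overline{x})$ and matching it against conditions (a) and (b).

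First I would note that by Theorem~\ref{th:NKs}, for any $y$ we have $-y\in N_{\mathcal{K}_s}(\overline{x})$ precisely when $\overline{x}\odot(-y)=0$ together with at least one of the two conditions $-y\leq 0$ or $\|{-y}\|_0\leq m-s$. Since $\overline{x}\odot(-y)=0$ iff $\overline{x}\odot y=0$, and $-y\leq 0$ iff $y\geq 0$, and $\|{-y}\|_0=\|y\|_0$, this reads:
\begin{equation*}
 -y\in N_{\mathcal{K}_s}(\overline{x})\iff \overline{x}\odot y=0\ \text{and}\ \bigl(y\geq 0\ \text{or}\ \|y\|_0\leq m-s\bigr).
\end{equation*}
Consequently, a non-zero $y\in N_{\Omega}(\overline{x})$ \emph{witnesses failure} of strong regularity iff $\overline{x}\odot y=0$ and ($y\geq 0$ or $\|y\|_0\leq m-s$). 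Negating, strong regularity holds iff every non-zero $y\in N_{\Omega}(\overline{x})$ satisfies: $\overline{x}\odot y\neq 0$, \emph{or} ($\overline{x}\odot y=0$ and $y\not\geq 0$ and $\|y\|_0>m-s$). The first disjunct is exactly condition (a), and the second disjunct — once one observes that under $\overline{x}\odot y=0$ the clause $\overline{x}\odot y=0$ is redundant — is exactly condition (b). This de Morgan bookkeeping is the entire content of the proof.

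The only point requiring a small amount of care, and the one I expect to be the main (mild) obstacle, is confirming that the two disjuncts in the negation line up cleanly with (a) and (b) without gaps or overlaps; in particular one must check that condition (b) as stated, ``$y\not\geq 0$ and $\|y\|_0>m-s$'', already forces $\overline{x}\odot y=0$ to be the operative regime or else is subsumed by (a), so that the stated alternative ``(a) or (b)'' is genuinely equivalent to the negated failure condition. Since (a) and (b) are presented as an inclusive alternative and (a) covers the case $\overline{x}\odot y\neq 0$ while (b) handles $\overline{x}\odot y=0$, the two cases are exhaustive and the equivalence follows directly from the biconditional displayed above. I would therefore write the proof as a short chain of equivalences: substitute Theorem~\ref{th:NKs} into \eqref{eq:strong regularity two sets}, rewrite the membership $-y\in N_{\mathcal{K}_s}(\overline{x})$ as above, and negate to obtain precisely ``(a) or (b)''.
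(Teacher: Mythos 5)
Your proposal is correct and follows essentially the same route as the paper: the paper's proof likewise substitutes the formula of Theorem~\ref{th:NKs} into the transversality condition \eqref{eq:strong regularity two sets}, distributing the intersection over the union $N_{\mathcal{K}_s}(\overline{x})=\{y:\overline{x}\odot y=0,\,y\leq 0\}\cup\{y:\overline{x}\odot y=0,\,\|y\|_0\leq m-s\}$ to obtain two conditions whose negation is exactly ``(a) or (b)''. Your de Morgan bookkeeping is just a more explicit write-up of the step the paper dismisses with ``the result now follows''.
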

\begin{proof}
 By characterization of $N_{\mathcal{K}_s}$ in Theorem~\ref{th:NKs}, we deduce that strong regularity of the sets $\{\Omega,\mathcal{K}_s\}$ is equivalent to
  \begin{align*}
   \{0\}&=N_{\Omega}(\overline{x})\cap \{y\in\mathbb{R}^m:\overline{x}\odot y=0,\,y\geq 0\}, \text{~and~}\\
   \{0\}&=N_{\Omega}(\overline{x})\cap \{y\in\mathbb{R}^m:\overline{x}\odot y=0,\,\|y\|_0\leq (m-s)\}.
  \end{align*}
 The result now follows.
\end{proof}

The following Proposition is the symmetric matrix analogue of Proposition~\ref{prop:strongRegKs}.

\begin{proposition}[Strong regularity for intersections with $\mathcal{S}_s$]\label{prop:strongRegSs}
 Let $\Omega\subseteq\Sym^m$ be closed and $\overline{X}\in\Omega\cap\mathcal{S}_s$. Then $\{\Omega,\mathcal{S}_s\}$ is strongly regular at $\overline{X}$ if and only if for all non-zero $Y\in N_{\Omega}(\overline{X})$ either (a) $\overline{X}Y\neq 0$, or (b) $Y\not\succeq 0$ and $\rank(Y)>m-s$.
\end{proposition}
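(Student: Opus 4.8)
The plan is to mirror the proof of Proposition~\ref{prop:strongRegKs}, substituting the characterization of $N_{\mathcal{S}_s}$ from Theorem~\ref{th:NSs} for that of $N_{\mathcal{K}_s}$ from Theorem~\ref{th:NKs}. Specializing strong regularity to the two-set case via \eqref{eq:strong regularity two sets}, the family $\{\Omega,\mathcal{S}_s\}$ is strongly regular at $\overline{X}$ exactly when
$$N_\Omega(\overline{X})\cap\left(-N_{\mathcal{S}_s}(\overline{X})\right)=\{0\}.$$
So the entire argument reduces to rewriting this single cone-intersection condition in the per-vector form of the statement.

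First I would compute $-N_{\mathcal{S}_s}(\overline{X})$ directly from Theorem~\ref{th:NSs}. Observing that $\overline{X}(-Y)=0$ iff $\overline{X}Y=0$, that $-Y\preceq 0$ iff $Y\succeq 0$, and that $\rank(-Y)=\rank(Y)$, negating the two constituent sets yields
$$-N_{\mathcal{S}_s}(\overline{X})=\{Y\in\Sym^m:\overline{X}Y=0,\,Y\succeq 0\}\cup\{Y\in\Sym^m:\overline{X}Y=0,\,\rank(Y)\leq m-s\}.$$
Since the intersection of $N_\Omega(\overline{X})$ with a union is trivial if and only if each of the two intersections is trivial, strong regularity is then equivalent to the simultaneous conditions $\{0\}=N_\Omega(\overline{X})\cap\{Y:\overline{X}Y=0,\,Y\succeq 0\}$ and $\{0\}=N_\Omega(\overline{X})\cap\{Y:\overline{X}Y=0,\,\rank(Y)\leq m-s\}$.

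Finally I would restate these set-emptiness statements as a dichotomy on each nonzero normal vector by taking the logical negation. Strong regularity fails precisely when there is a nonzero $Y\in N_\Omega(\overline{X})$ with $\overline{X}Y=0$ and either $Y\succeq 0$ or $\rank(Y)\leq m-s$; negating this (and applying De Morgan to the disjunction) gives that strong regularity holds if and only if every nonzero $Y\in N_\Omega(\overline{X})$ satisfies either (a) $\overline{X}Y\neq 0$, or (b) $Y\not\succeq 0$ and $\rank(Y)>m-s$, which is the claimed characterization. As this is a purely bookkeeping deduction once Theorem~\ref{th:NSs} is in hand, there is no substantial obstacle; the only points needing care are the correct tracking of signs when negating the cone (so that $Y\preceq 0$ becomes $Y\succeq 0$) and the De Morgan step converting the two emptiness conditions into the per-vector alternative (a)/(b).
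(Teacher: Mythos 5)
Your proposal is correct and follows essentially the same route as the paper: the paper's proof simply says to argue as in Proposition~\ref{prop:strongRegKs}, which is exactly what you do --- substitute the formula from Theorem~\ref{th:NSs} into the two-set transversality condition \eqref{eq:strong regularity two sets}, negate the cone (flipping $Y\preceq 0$ to $Y\succeq 0$), split the intersection over the union, and take the contrapositive. Your explicit handling of the sign flip and the De Morgan step matches the implicit steps in the paper's vector-case argument, so there is nothing to add.
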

\begin{proof}
 Argue similarly to Proposition~\ref{prop:strongRegKs}, using the characterization of $N_{\mathcal{S}_s}$ in Theorem~\ref{th:NSs}.
\end{proof}

In Section~\ref{sec:linear system}, we give an application of Proposition~\ref{prop:strongRegKs} to solving sparse linear systems, and of Proposition~\ref{prop:strongRegSs} to low-rank semi-definite programming feasibility.

\begin{remark}[Affine-hull regularity]\label{re:affine-hull regularity}
 \emph{Affine-hull regularity} is a regularity notion for collections of sets which is weaker than strong regular which has been utilized to obtain convergence results in the absence of strong regularity \cite{restrictedNormalCones,phan2014linearDR}. However, as we now explain, for collections of constraint sets containing a sparsity set, the two notions coincide.
			
 Focusing on the case of two sets, affine-hull regularity can be viewed as a modification of the definition of strong regularity in which the Mordukovich normal cones in \eqref{eq:strong regularity two sets} are replaced with the so-called \emph{restricted Mordukovich normal cones}, developed in \cite{restrictedNormalCones,restrictedNormalConesAffine}, with the restriction performed with respect to the affine-hull of the union of the two constraint sets. If $s\geq 1$ and the affine-hull of the any of sparsity sets $\mathcal{K}_s$, $\mathcal{A}_s$, $\mathcal{S}_s$ or $\mathcal{R}_s$ is equal to the entire space. The corresponding affine restricted Mordukovich normal and standard Mordukhovich normal cones coincide and, it therefore follows that, for a collection of constraint sets containing such a sparsity set, the notions of strong and affine-hull regularity coincide.
\qede\end{remark}

A closed set $\Omega\subseteq\E$ is \emph{prox-regular} at $\overline{x}\in\Omega$ if for all $\overline{y}\in N_\Omega(\overline{x})$ there exists an $\epsilon>0$ and $\rho>0$ such that whenever $\|x-\overline{x}\|<\epsilon$ and $y\in N_\Omega(x)$ with $\|y-\overline{y}\|<\epsilon$ then $x$ is the unique nearest point of $\{z\in\Omega:\|z-\overline{x}\|<\epsilon\}$ to $x+\rho y$. A useful characterization which we utilize in the following proofs is that prox-regularity at $\overline{x}\in\Omega$ is equivalent to single-valuedness of $P_{\Omega}$ around $\overline{x}$ \cite[Th.~1.3]{rockafellar2000local}. 

\begin{proposition}[Prox-regularity of $\mathcal{K}_s$]\label{prop:proxRegKs}
 Let $\overline{x}\in \mathcal{K}_s$ with $s\in\{1,\dots,(m-1)\}$ and $m\geq 2$. Then $\mathcal{K}_s$ is prox-regular at $\overline{x}$ if and only if $\|\overline{x}\|_0=s$.
\end{proposition}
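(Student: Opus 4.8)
The plan is to use the characterization that prox-regularity at $\overline{x}$ is equivalent to single-valuedness of the projection $P_{\mathcal{K}_s}$ on a neighbourhood of $\overline{x}$ \cite[Th.~1.3]{rockafellar2000local}. I would split into the two directions and, more fundamentally, into the two cases $\|\overline{x}\|_0 = s$ and $\|\overline{x}\|_0 < s$ (the latter being the obstruction, since $\|\overline{x}\|_0 \leq s$ always and $\|\overline{x}\|_0 > s$ is impossible for $\overline{x}\in\mathcal{K}_s$).

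For the sufficiency direction, suppose $\|\overline{x}\|_0 = s$. The goal is to show $P_{\mathcal{K}_s}$ is single-valued near $\overline{x}$. First I would note that the support $\Ind(\overline{x})$ has exactly $s$ entries, all strictly positive. By continuity, any $x$ sufficiently close to $\overline{x}$ has, on the indices $j\in\Ind(\overline{x})$, strictly positive entries $x_j$ which are strictly larger than $x_k^+$ for all $k\notin\Ind(\overline{x})$ (the latter being close to $0$). Consequently, by Proposition~\ref{prop:projKs}\eqref{projiv}, the maximizing index set $\mathcal{J}_s(x)$ is forced to be exactly $\{\Ind(\overline{x})\}$ — a single set — so the thresholding selection is unique and $P_{\mathcal{K}_s}(x)$ is a singleton. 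This establishes single-valuedness on a neighbourhood, hence prox-regularity.

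For the necessity direction (the main obstacle), I would show that if $\|\overline{x}\|_0 < s$, then $P_{\mathcal{K}_s}$ fails to be single-valued arbitrarily close to $\overline{x}$, so $\mathcal{K}_s$ is not prox-regular there. The idea is to exploit the ``extra room'' in the sparsity budget: since $\|\overline{x}\|_0 < s$ and $s \leq m-1 < m$, there are at least two coordinates $j_1, j_2$ outside $\Ind(\overline{x})$ (here the hypotheses $m\geq 2$ and $s\leq m-1$ guarantee enough free coordinates — this is where they are used). I would construct a perturbed point $x$ arbitrarily close to $\overline{x}$ that agrees with $\overline{x}$ on $\Ind(\overline{x})$ but places two equal strictly-positive small values $\delta$ on coordinates $j_1$ and $j_2$, with the remaining free coordinates set to $0$ or negative. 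Because $\|\overline{x}\|_0 < s$ leaves exactly one remaining slot in the budget of $s$ after accounting for $\Ind(\overline{x})$ and one of $\{j_1,j_2\}$, the tie between $j_1$ and $j_2$ (both equal to $\delta$) forces $\mathcal{J}_s(x)$ to contain two distinct index sets, yielding two distinct projections. Hence $P_{\mathcal{K}_s}(x)$ is not a singleton, and since $x\to\overline{x}$ as $\delta\to 0$, single-valuedness fails on every neighbourhood.

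The delicate bookkeeping is in the necessity case: I must verify that the budget arithmetic genuinely produces a tie-induced non-uniqueness, which requires that after filling $\Ind(\overline{x})$ the leftover capacity $s - \|\overline{x}\|_0 \geq 1$ is small enough that the two tied coordinates cannot both be selected. If $s - \|\overline{x}\|_0 \geq 2$ I would instead need to introduce more than two tied coordinates (requiring $s - \|\overline{x}\|_0 + 1$ of them), and confirm that there are enough free coordinates available, namely $m - \|\overline{x}\|_0 \geq s - \|\overline{x}\|_0 + 1$, which reduces to $m \geq s+1$, i.e.\ $s\leq m-1$ — precisely the standing hypothesis. I expect this counting argument, rather than any deep analysis, to be the crux; the rest follows routinely from Proposition~\ref{prop:projKs} and the projection-singlevaluedness characterization of prox-regularity.
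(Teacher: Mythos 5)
Your proposal is correct and follows essentially the same route as the paper: both use the equivalence of prox-regularity with local single-valuedness of $P_{\mathcal{K}_s}$ \cite[Th.~1.3]{rockafellar2000local}, prove sufficiency by showing $\mathcal{J}_s(x)=\{\Ind(\overline{x})\}$ for $x$ near $\overline{x}$, and prove necessity by perturbing $\overline{x}$ with equal small positive values on free coordinates so that ties force multiple projections. Note that your final paragraph's generalization (using $s-\|\overline{x}\|_0+1$ tied coordinates rather than just two) is not optional but is the actual construction needed whenever $s-\|\overline{x}\|_0\geq 2$, and it is precisely the paper's choice of the index set $\mathbb{I}_0$ with $|\mathbb{I}_0|=(s-\|\overline{x}\|_0)+1$.
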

\begin{proof} 
 On one hand, suppose $\|\overline{x}\|_0<s$. To show that $\mathcal{K}_s$ is not prox-regular at $\overline{x}$, it suffices to produce a sequence $x_k\to\overline{x}$ such that $P_{\mathcal{K}_s}(x_k)$ is not singleton for any $k\in\mathbb{N}$. To this end, since $\|\overline{x}\|_0\leq (s-1)$, we have
  $$2\leq(s-\|\overline{x}\|_0)+1=(s+1)-\|\overline{x}\|_0\leq m-\|\overline{x}\|_0.$$
 Since $x$ has $(m-\|x\|_0)$ entries which are zero, therefore there exists an index set $\mathbb{I}_0\subseteq \{1,\dots,m\}$ with $|\mathbb{I}_0|=(s-\|\overline{x}\|_0)+1\geq 2$ such that $\overline{x}_j=0$ for all $j\in\mathbb{I}_0$. Define the vector $v\in\mathbb{R}^m$ and the sequence $(x_k)$ by
  $$v:=\sum_{j\in\mathbb{I}_0}e_{j},\qquad x_k:=\overline{x}+\frac{1}{k}v.$$
 Observe $\|x_k\|_0=\|\overline{x}\|_0+\|v\|_0=s+1$. Thus, for sufficiently large $k$, Proposition~\ref{prop:projKs} yields
$$P_{\mathcal{K}_s}(x_k)=\left\{\overline{x}+\frac{1}{k}(v-e_j):j\in\mathbb{I}_0\right\}\text{ where }|P_{\mathcal{K}_s}(x_k)|=|\mathbb{I}_0|\geq 2.$$
 By taking $k$ sufficiently large, the distance between $x_k$ and $\overline{x}$ can be made arbitrary small, thus proving the that $\mathcal{K}_s$ is not prox-regular at $\overline{x}$. 
 
 On the other hand, suppose $\|\overline{x}\|_0=s$, and let $\delta:=\frac{1}{2}\min\{\overline{x}_j:\overline{x}_j>0\}>0$. Since $\overline{x}$ has maximal sparsity, the set $\mathcal{J}_s(\overline{x})$ is a singleton, say $\mathcal{J}_s(\overline{x})=\{\mathbb{J}_0\}$. For any $x\in \mathbb{B}_\delta(\overline{x})$, $j\in\mathbb{J}_0$ and $i\not\in\mathbb{J}_0$ we have
 $$ \overline{x}_j-x_j \leq |\overline{x}_j-x_j|\leq \|\overline{x}-x\|<\delta\leq\frac{1}{2}\overline{x}_j,\text{~~and~~} x_i=|x_i-\overline{x}_i|\leq \|\overline{x}-x\|<\delta \leq \frac{1}{2}\overline{x}_j.$$
 Altogether, $0<\overline{x}_j/2\leq x_j$, $x_i<\overline{x}_j/2$, and therefore $\mathcal{J}_s(x)=\{\mathbb{J}_0\}$. By Proposition~\ref{prop:projKs} $P_{\mathcal{K}_s}(x)$ is single-valued, hence $\mathcal{K}_s$ is prox-regular at $\overline{x}$.
\end{proof}

\begin{proposition}[Prox-regularity of $\mathcal{S}_s$]\label{prop:proxRegSs}
 Let $\overline{X}\in \mathcal{S}_s$ with $s\in\{1,\dots,(m-1)\}$ and $m\geq 2$. Then $\mathcal{S}_s$ is prox-regular at $\overline{X}$ if and only if $\rank(\overline{X})=s$.
\end{proposition}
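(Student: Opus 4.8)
The plan is to mirror the proof of Proposition~\ref{prop:proxRegKs} exactly, exploiting the symmetric/spectral correspondence to lift the vector result to matrices. As noted earlier in the excerpt (via \cite[Th.~1.3]{rockafellar2000local}), prox-regularity at $\overline{X}$ is equivalent to single-valuedness of $P_{\mathcal{S}_s}$ in a neighbourhood of $\overline{X}$, so both directions reduce to analysing $P_{\mathcal{S}_s}$, for which we have the explicit spectral formula in Proposition~\ref{prop:projSs}.

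For the ``only if'' direction, I would assume $\rank(\overline{X})<s$ and produce a sequence $X_k\to\overline{X}$ with $P_{\mathcal{S}_s}(X_k)$ never a singleton. The key observation is that $\rank(\overline{X})<s$ means at least two of the top-$s$ eigenvalues of $\overline{X}$ are zero (since $s\leq m-1$ guarantees enough eigenvalues to work with, and the bound $2\leq (s-\rank\overline{X})+1\leq m-\rank\overline{X}$ from the vector proof carries over verbatim to eigenvalues). Fixing an ordered spectral decomposition $\overline{X}=U^\trans(\diag\lambda(\overline{X}))U$, I would perturb in the directions of two eigenvectors associated with zero eigenvalues—concretely setting $X_k:=\overline{X}+\tfrac{1}{k}\sum_{j\in\mathbb{I}_0}U^\trans(e_je_j^\trans)U$ for an index set $\mathbb{I}_0$ of size $(s-\rank\overline{X})+1\geq 2$ indexing zero eigenvalues. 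Then $\lambda(X_k)$ develops a tie among the cut-off eigenvalues, so by Proposition~\ref{prop:projSs} (applied with the resulting non-unique choice of top-$s$ eigenvalue subset, analogous to $\mathcal{J}_s$ being non-singleton in the vector case) the projection is set-valued, and $X_k\to\overline{X}$.

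For the ``if'' direction, I would assume $\rank(\overline{X})=s$ and show $P_{\mathcal{S}_s}$ is single-valued near $\overline{X}$. Setting $\delta:=\tfrac12\lambda_s(\overline{X})>0$ (the smallest positive eigenvalue), I would use eigenvalue perturbation—specifically Weyl's inequality, $|\lambda_j(X)-\lambda_j(\overline{X})|\leq\|X-\overline{X}\|$—to argue that for $X$ in the ball of radius $\delta$ about $\overline{X}$, a strict spectral gap persists: the $s$th eigenvalue stays above $\lambda_s(\overline{X})/2$ while the $(s+1)$th stays strictly below it. This strict gap forces the top-$s$ eigenspace selection in Proposition~\ref{prop:projSs} to be unambiguous, so $P_{\mathcal{S}_s}(X)$ is single-valued and prox-regularity follows.

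The main obstacle is the ``if'' direction: while the vector proof merely tracked coordinate values, here I must ensure that a strict eigenvalue gap actually yields a \emph{unique} projected matrix, not just a unique choice of eigenvalues. The subtlety is that $\Orth^m(X)$ may be large when eigenvalues are repeated, yet Proposition~\ref{prop:projSs} produces a single matrix $Y$ precisely when the eigenvalues being retained and discarded are separated by a gap (so that no ambiguity in choosing eigenvectors across the cutoff arises); I would need to verify that the strict inequality $\lambda_s(X)>\lambda_{s+1}(X)$ together with $\lambda_s(X)>0$ makes $U^\trans(\diag\lambda^+_s(X))U$ independent of the particular $U\in\Orth^m(X)$.
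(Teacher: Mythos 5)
Your proposal is correct, but it takes a genuinely different route from the paper. The paper's proof is two lines: it invokes Proposition~\ref{prop:proxRegKs} to get prox-regularity of $\mathcal{K}_s$ at $\lambda(\overline{X})$ if and only if $\|\lambda(\overline{X})\|_0=s$, and then cites the transfer theorem of Daniilidis, Lewis, Malick and Sendov \cite[Th.~9]{lewis2008proxRegSpectral}, which states that a spectral set is prox-regular at $X$ exactly when the underlying symmetric set is prox-regular at $\lambda(X)$; this is the same lifting philosophy used throughout Section~3. You instead redo the argument directly in $\Sym^m$: your ``only if'' construction (perturbing along $(s-\rank\overline{X})+1\geq 2$ eigenvectors of the zero eigenvalue to create a tie at the cutoff) is a faithful matrix translation of the vector proof and works as stated, and your ``if'' direction via Weyl's inequality correctly produces a persistent strict gap $\lambda_s(X)>\delta>\lambda_{s+1}(X)$ with $\lambda_s(X)>0$ on a $\delta$-ball. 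The step you flag as needing verification is genuine but standard, and does hold: if $U,V\in\Orth^m(X)$, then $W:=VU^\trans$ is orthogonal and commutes with $\diag\lambda(X)$, hence is block-diagonal with respect to the groups of equal eigenvalues; since the strict gap guarantees no group straddles position $s$, the vector $\lambda^+_s(X)$ is constant on each group, so $W$ also commutes with $\diag\lambda^+_s(X)$, giving $V^\trans(\diag\lambda^+_s(X))V=U^\trans(\diag\lambda^+_s(X))U$. (Equivalently, under the gap the truncation is a polynomial in $X$ by Lagrange interpolation on the distinct eigenvalues.) With that lemma written out, your proof is complete via the single-valuedness characterization \cite[Th.~1.3]{rockafellar2000local}. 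The trade-off: the paper's route is shorter and delegates all eigenvector ambiguity to the cited theorem, while yours is self-contained, avoids reliance on \cite{lewis2008proxRegSpectral}, and makes the underlying geometry explicit at the cost of handling $\Orth^m(X)$ by hand.
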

\begin{proof}
 By Proposition~\ref{prop:proxRegKs}, the set $\mathcal{K}_s$ is prox-regular at $\lambda(\overline{X})$ if and only if $\|\lambda(\overline{X})\|_0=s$. The result now follows from \cite[Th.~9]{lewis2008proxRegSpectral}.
\end{proof}

\begin{remark}\label{re:proxRegSs}
 In both of the above propositions, we excluded the possibility that $s=m$. However, in this case, the sets are convex hence everywhere prox-regular.
\qede\end{remark}

\section{Implications and Applications}\label{sec:application}
In this section we give instances of problems to which the derived normal cone formulae and regularity results apply. In particular, we consider instances of the ($2$-set) \emph{feasibility problem}. That is, given sets $C_1,C_2\subseteq\E$, we aim to
 \begin{equation}\label{eq:feasibility}
  \text{find~}x\in C_1\cap C_2\neq\emptyset.
 \end{equation}
 
 The study of these regularity properties are of interest, in part, because they are precisely the ingredient required by the current state-of-art theory for non-convex \emph{projection algorithms} to guarantee local convergence \cite{restrictedNormalConesAffine,restrictedNormalCones,HLN2014sparse,phan2014linearDR}. Examples of such algorithms include \emph{cyclic projection} and \emph{Douglas--Rachford methods}. These algorithms are intimately related to a number fundamental algorithms arising the optics literature including the \emph{error reduction}, the \emph{difference-map} and \emph{hybrid input-output} algorithms \cite{bauschke2002phase,fienup,elser}.
 
 In each of the following examples, we first state the problem and then give an appropriate feasibility formulation or relaxation. The conditions required for the regularity properties studied to hold are then considered. Particular attention is given to our final example which arise in low-rank distance matrix reconstruction problems. In this direction, we given a characterization of the sufficient conditions required by \cite{phan2014linearDR} to guarantee local linear convergence of the \emph{Douglas--Rachford method}. This result complements the empirical studies \cite{BT2014inverse,ABT2014matrix} in which local linear convergence was observed. Moreover, to our knowledge, there are no other results in the literature which provide sufficient conditions for convergence of the Douglas--Rachford algorithm applied to this problem.

\subsection{Sparse and Low-Rank Linear Systems}\label{sec:linear system}
Consider the problem of finding the sparsest non-negative solution to a linear system. That is,
   \begin{equation}\label{eq:l0min nonneg}
   \min_{x\in\mathbb{R}^n}\{\|x\|_0: Ax=b,\,x\geq 0\},
  \end{equation} 
where $A\in\mathbb{R}^{m\times n}$ and $b\in\mathbb{R}^n$.
  
When a desired sparsity bound, $s$, can be given \emph{a priori}, the problem can be reformulated as
 \begin{equation}\label{eq:sparseLinear}
  \text{find~}x\in\mathbb{R}^m\text{~such that~}Ax=b,\,x\geq 0,\,\|x\|_0\leq s.
 \end{equation}
 As we have already mentioned, problems of precisely this kind arise when a signal $x$, which is known \emph{a priori} to be non-negative, purely real and sparse, is to be recovered from the under-determined linear system specified by the measurement matrix $A$ and observation vector $b$. Another example is given by \emph{mixed-integer linear programs}.

\begin{example}[Mixed-binary linear program]
	Let $M>0$ and consider a mixed-binary linear program having feasible region $F$ given by
		$$F:=\left\{(x,y)\in\mathbb{R}^m\times\{0,1\}^m:Ax=b,\,0\leq x\leq My,\,\sum_{j=1}^my_j\leq s\right\}.$$
	Finding a pair contained in $F$ can be formulated in terms of \eqref{eq:sparseLinear}. Precisely, 
 $$(x,y)\in F \iff \left\{\begin{array}{l}
                   x\in\mathbb{R}^m\text{ satisfies \eqref{eq:sparseLinear} and }             \|x\|_\infty\leq M,\\
                   y\in\{0,1\}^m\text{ with }y_j=1\text{ whenever }x_j\neq 0.
                 \end{array}\right.$$
	Regarding this equivalence, note that in order to find a pair $(x,y)\in F$, it suffices to find an $x\in\mathbb{R}^m$ satisfying \eqref{eq:sparseLinear} with $\|x\|_\infty\leq M$. Using such a point, a feasible vector $y\in\{0,1\}^m$ can easily be computed since $\|x\|_0\leq s$.
                 
	Finding feasible points of mixed-integer linear programs is important as they are frequently solved using \emph{branch-and-bound} techniques which require feasible solutions during \emph{pruning steps}.
\qede\end{example}

Returning our attention to the general problem, we observe \eqref{eq:sparseLinear} to be equivalent to the feasibly problem \eqref{eq:feasibility} with constraint sets
  \begin{equation}\label{eq:sparsity constriants vec}
   \begin{split}
     C_1 &:= \{x\in\mathbb{R}^m:Ax=b\},\\
     C_2 &:= \mathcal{K}_s=\{x\in\mathbb{R}^n:x\geq 0,\,\|x\|_0\leq s\}.
   \end{split}
  \end{equation}
  
 We may now give a characterisation of strong regularity of the collection $\{C_1,C_2\}$ using the results of previous sections. 
 \begin{proposition}
   Suppose $\overline{x}\in C_1\cap C_2$ where the constraint sets $C_1$ and $C_2$ are as defined in \eqref{eq:sparsity constriants vec}. Then $\{C_1,C_2\}$ is strongly regular at $\overline{x}$ if and only if, for all $y\in\range A^\trans\setminus\{0\}$, either $\overline{x}\odot y=0$, or (b) $y\not\geq 0$ and $\|y\|_0>m-s$.
 \end{proposition}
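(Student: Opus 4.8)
The plan is to apply the general strong-regularity criterion for intersections with $\mathcal{K}_s$ (Proposition~\ref{prop:strongRegKs}) to the specific set $C_1=\{x:Ax=b\}$, identifying $\Omega=C_1$. The only thing that needs to be computed is the Mordukhovich normal cone $N_{C_1}(\overline{x})$, after which the result should follow by direct substitution into the criterion of Proposition~\ref{prop:strongRegKs}.

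First I would observe that $C_1$ is an affine subspace (a translate of $\ker A$), hence closed and convex. For such a set the three normal cones coincide by Fact~\ref{fact:normalInclusion}(c), so it suffices to compute the convex normal cone. The normal cone to an affine subspace $\{x:Ax=b\}$ at any of its points is the orthogonal complement of the direction space $\ker A$, which is precisely $\range A^\trans$. Thus $N_{C_1}(\overline{x})=\range A^\trans$, independent of the base point $\overline{x}\in C_1$. This is the one small computation the proof requires, and it is standard linear algebra (using $(\ker A)^\perp=\range A^\trans$).

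Next I would invoke Proposition~\ref{prop:strongRegKs} with $\Omega=C_1$, so that $N_\Omega(\overline{x})=\range A^\trans$. The proposition states that $\{C_1,\mathcal{K}_s\}$ is strongly regular at $\overline{x}$ if and only if every non-zero $y\in N_{C_1}(\overline{x})=\range A^\trans$ satisfies either (a) $\overline{x}\odot y\neq 0$, or (b) $y\not\geq 0$ and $\|y\|_0>m-s$. Negating, strong regularity fails exactly when there exists a non-zero $y\in\range A^\trans$ with $\overline{x}\odot y=0$ and which \emph{also} fails condition (b), i.e.\ with $y\geq 0$ or $\|y\|_0\leq m-s$; this matches the claimed characterization (the stated condition asserts that \emph{every} such $y$ satisfies (a) $\overline{x}\odot y=0$ fails, or (b) holds), so the equivalence is immediate once $N_{C_1}(\overline{x})$ is identified with $\range A^\trans$.

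I do not anticipate a genuine obstacle here, since the bulk of the work has been front-loaded into Proposition~\ref{prop:strongRegKs} and Theorem~\ref{th:NKs}. The only point requiring mild care is the identification $N_{C_1}(\overline{x})=\range A^\trans$ and confirming it is independent of the chosen $\overline{x}$; this is routine. One should also note that $C_1$ is non-empty and closed (guaranteed by the hypothesis $\overline{x}\in C_1\cap C_2$), so that Proposition~\ref{prop:strongRegKs} applies verbatim. The proof is therefore essentially a one-line substitution after recalling the normal cone to an affine subspace.
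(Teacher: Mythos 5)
Your proof is correct and follows the paper's own argument exactly: identify $N_{C_1}(\overline{x})=\range A^\trans$ (the paper cites \cite[Sec.~2.1,~Exer.~4]{ConvAnalNonOpt} for this, while you derive it from $(\ker A)^\perp=\range A^\trans$, which amounts to the same standard fact) and then apply Proposition~\ref{prop:strongRegKs} with $\Omega:=C_1$. You also correctly read the first alternative in the statement as $\overline{x}\odot y\neq 0$, consistent with Proposition~\ref{prop:strongRegKs}; the ``$\overline{x}\odot y=0$'' appearing in the proposition as printed is evidently a typo.
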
    
 \begin{proof}
   Suppose $\overline{x}\in C_1\cap C_2$. The normal cone to $C_1$ at $\overline{x}$ is given by (see \cite[Sec.~2.1,~Exer.~4]{ConvAnalNonOpt})
   $$N_{C_1}(\overline{x}) = \range A^\trans.$$
  The result now follows from Proposition~\ref{prop:strongRegKs} with $\Omega:=C_1$.
 \end{proof}

 We now turn our attention to the symmetric matrix analogue. Consider the problem of finding a minimum rank positive semi-definite matrix solution to a linear system. That is,
  \begin{equation}\label{eq:lowRankSDPminimisation}
   \min_{X\in\Sym^n}\{\rank X:\left(\langle A_j,X\rangle\right)_{j=1}^p=b,\,X\succeq 0\}.
  \end{equation}  
 As before, when a desired bound on the rank, $s$, can be given \emph{a priori}, the problem can be reformulated as
  \begin{equation}\label{eq:lowRankSDP}
   \text{find~}X\in\Sym^m\text{~such that~}\left(\langle A_j,X\rangle\right)_{j=1}^p=b,\,X\succeq 0,\,\rank(X)\leq s,
  \end{equation}
 where $A_j\in\Sym^m$ for each $j\in\{1,\dots,p\}$ and $b\in\mathbb{R}^p$.

 Problems of this kind arise in rank-constrained semi-definite programming \cite[Exer.~2.4]{ApproxAlgSemi}, an NP-hard problem.

 We observe \eqref{eq:lowRankSDP} to be equivalent to the two set feasibility problem \eqref{eq:feasibility} with constraint sets
  \begin{equation}\label{eq:low rank psd matrix}
   \begin{split}
    C_1 &:=\{X\in\Sym^m:\left(\langle A_j,X\rangle\right)_{j=1}^p=b\}, \\    
    C_2 &:=\mathcal{S}_s=\{X\in\Sym^m:X\succeq 0,\,\rank X\leq s\}.
   \end{split}
  \end{equation}
  
   We may now give a characterization of strong regularity of the collection $\{C_1,C_2\}$.
 \begin{proposition}
   Suppose $\overline{X}\in C_1\cap C_2$ where the constraint sets $C_1$ and $C_2$ are as defined in \eqref{eq:low rank psd matrix}. Then $\{C_1,C_2\}$ is strongly regular at $\overline{X}$ if and only if, for all $Y\in\vspan\{A_1,A_2,\dots,A_p\}\setminus\{0\}$, either (a) $\overline{X}Y=0$, or (b) $Y\not\succeq 0$ and $\rank (Y)>m-s$.
 \end{proposition}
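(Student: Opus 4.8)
The plan is to mirror the proof of the preceding vector-valued result exactly, since the present proposition is the positive semi-definite analogue of the strong regularity characterization already established for $\mathcal{K}_s$. First I would identify the Mordukhovich normal cone to the affine set $C_1$. Since $C_1=\{X\in\Sym^m:(\langle A_j,X\rangle)_{j=1}^p=b\}$ is a closed affine (hence convex) subspace-translate, its normal cone at any point $\overline{X}\in C_1$ is the constant subspace $N_{C_1}(\overline{X})=\vspan\{A_1,\dots,A_p\}$, which is the orthogonal complement of the linear part of $C_1$ under the Frobenius inner product. This is the exact matrix counterpart of the identity $N_{C_1}(\overline{x})=\range A^\trans$ used in the vector case, and I would cite the same convex-analysis reference (\cite[Sec.~2.1,~Exer.~4]{ConvAnalNonOpt}).

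With this in hand, the second step is to apply Proposition~\ref{prop:strongRegSs} directly, taking $\Omega:=C_1$. That proposition characterizes strong regularity of $\{\Omega,\mathcal{S}_s\}$ at $\overline{X}$ in terms of a condition that must hold for every non-zero $Y\in N_\Omega(\overline{X})$: namely that either (a) $\overline{X}Y\neq 0$, or (b) $Y\not\succeq 0$ and $\rank(Y)>m-s$. Substituting $N_{C_1}(\overline{X})=\vspan\{A_1,\dots,A_p\}$ turns this into the statement that the condition holds for all non-zero $Y$ in the span of the $A_j$. The only cosmetic difference is that the proposition is phrased as a \emph{failure} condition for non-regularity, so the equivalent regularity statement is that for every $Y\in\vspan\{A_1,\dots,A_p\}\setminus\{0\}$, either $\overline{X}Y=0$ or ($Y\not\succeq 0$ and $\rank(Y)>m-s$); this is precisely what is claimed.

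There is essentially no obstacle here: the entire content has been front-loaded into Proposition~\ref{prop:strongRegSs}, whose proof in turn rests on the normal cone formula of Theorem~\ref{th:NSs}. The proof is therefore a two-line deduction. The only point requiring a modicum of care is verifying that the normal cone to $C_1$ is indeed the full span $\vspan\{A_1,\dots,A_p\}$ rather than, say, a convex cone generated by the $A_j$; this follows because $C_1$ is an affine subspace (not merely a convex set), so equality constraints contribute their entire span to the normal cone rather than a one-sided cone. Accordingly I would write:

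\begin{proof}
 Suppose $\overline{X}\in C_1\cap C_2$. Since $C_1$ is an affine subspace, its normal cone at $\overline{X}$ is the subspace (see \cite[Sec.~2.1,~Exer.~4]{ConvAnalNonOpt})
  $$N_{C_1}(\overline{X})=\vspan\{A_1,A_2,\dots,A_p\}.$$
 The result now follows from Proposition~\ref{prop:strongRegSs} with $\Omega:=C_1$.
\end{proof}
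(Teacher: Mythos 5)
Your proof is essentially identical to the paper's: both identify $N_{C_1}(\overline{X})=\vspan\{A_1,A_2,\dots,A_p\}$ (the paper writes this as $\range\bigl(\left(\langle A_j,\cdot\rangle\right)_{j=1}^p\bigr)^\ast$, citing the same exercise together with \cite[Sec.~5]{vn93}) and then conclude by applying Proposition~\ref{prop:strongRegSs} with $\Omega:=C_1$. One caveat: your narrative's claim that condition (a) as stated, $\overline{X}Y=0$, is a ``cosmetic'' rephrasing of the condition $\overline{X}Y\neq 0$ appearing in Proposition~\ref{prop:strongRegSs} is incorrect --- the two are not equivalent; the ``$=0$'' is a typo in the statement (inherited from the paper, and present in its vector analogue as well), and what your proof, like the paper's, actually establishes is the version with $\overline{X}Y\neq 0$.
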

 \begin{proof}
 Suppose $\overline{X}\in C_1\cap C_2$. Then the normal cone to $C_1$ at $\overline{X}$ is given by (see \cite[Sec~2.1,~Exer~4]{ConvAnalNonOpt} \& \cite[Sec.~5]{vn93})
  $$N_{C_1}(\overline{X})=\range \left(\left(\langle A_j,\cdot \rangle\right)_{j=1}^p\right)^\ast=\vspan\{A_1,A_2,\dots,A_p\}.$$
 The result now follows from \ref{prop:strongRegSs} with $\Omega:=C_1$.
\end{proof}

\begin{remark}[Sparsity upper bounds]
 In order to apply Formulations~\eqref{eq:sparseLinear} and \eqref{eq:lowRankSDP}, it is necessary that the sparsity/rank parameter $s$ is know \emph{a priori}. While this is not always possible, we emphasise that it is usually not the case that the \emph{optimal} sparsity parameter (in the sense of \eqref{eq:l0min nonneg} or \eqref{eq:lowRankSDPminimisation}) is needed. When projection or reflection methods are applied to a formulation only prescribing an upper bounded for the parameter, there is nothing to prevent sparser/lower-rank solutions from being returned. For further details, see \cite{HLN2014sparse}.
\end{remark}

\subsection{Low-Rank Euclidean Distance Matrix Completion}
 We now use the derived normal cone formulae to provide theoretical justification for an application of the  \emph{Douglas--Rachford method}. Given constraint sets $C_1$ and $C_2$ with non-empty intersection, and initial point $x_0$, this method generates a sequence $(x_n)$ by selecting
  $$x_{n+1}\in Tx_n:=\left(\frac{I+R_{C_2}R_{C_1}}{2}\right)x_n.$$
 Here $R_{\Omega}:=2P_{\Omega}-I$ denotes the \emph{reflection mapping w.r.t.~$\Omega$}. If the sequence $(x_n)$ converges to a fixed point $\overline{x}\in\Fix T:=\{x:x\in Tx\}$, then there is an element $\overline{p}\in P_{C_1}(\overline{x})$ such that $\overline{p}\in C_1\cap C_2$. That is, the point $\overline{p}$ solves the \emph{feasibility problem} (rather than the fixed point $\overline{x}$ itself). For closed convex sets, this method is well-understood and global convergence guaranteed \cite{monotoneOperator}.

Recall a \emph{Euclidean distance matrix (EDM)} is a matrix $D=(D_{ij})\in\Sym^{m+1}$ such that there exists points $p_1,p_2,\dots,p_m,p_{m+1}\in\mathbb{R}^q$ such that
 \begin{equation}\label{eq:EDM}
  D_{ij}=\|p_i-p_j\|^2\text{~for all~}(i,j)\in\{1,2,\dots,(m+1)\}\times\{1,2,\dots,(m+1)\}.
 \end{equation}
Clearly any EDM is non-negative and hollow ({\em i.e.,} contains only zeros along its main diagonal). When \eqref{eq:EDM} holds for a set points in $\mathbb{R}^q$, we say $D$ is \emph{embeddable} in $\mathbb{R}^q$. If $D$ is embeddable in $\mathbb{R}^q$ but not $\mathbb{R}^{q-1}$ then we say $D$ is \emph{irreducibly embeddable} in $\mathbb{R}^q$.

In \cite{ABT2014matrix,BT2014inverse} the authors consider the problem of reconstructing a \emph{low-rank Euclidean distance matrix}, $X$, knowing only a subset of its entries. Let $D=(D_{ij})$ be the partial EDM with the position of the known entries specified by the index set $\mathcal{I}$. Assuming the EDM is \emph{embeddable} in $\mathbb{R}^s$, this problem can be formulated as the feasibility problem \eqref{eq:feasibility} with constraint sets 
\begin{equation}\label{eq:lowrankEDM}\begin{split}
 C_1 &= \left\{X\in\Sym^{m+1}:X\geq 0,\,X_{ij}=D_{ij}\text{ for all }(i,j)\in\mathcal{I}\right\},\\
 C_2 &= \left\{X\in\Sym^{m+1}:Q(-X)Q=\begin{bmatrix}
                                              \widehat{X} & d \\
                                              d^\trans         & \delta \\
                                            \end{bmatrix},\,\widehat{X}\in\mathcal{S}_s,\,d\in\mathbb{R}^{m},\,\delta\in\mathbb{R}\right\}.
\end{split}\end{equation}
Here the matrix $Q\in\Orth^{m+1}$ is the \emph{Householder matrix} given by
   \begin{equation*}
    Q=I-\frac{2vv^\trans}{v^\trans v},\text{ where }v=(1,\,1,\,\dots,\, 1,\,1+\sqrt{m+1})^\trans\in\mathbb{R}^{m+1}.
   \end{equation*}   
We denote by $G:\Sym^{m+1}\to\Sym^{m+1}$ the linear isometry $X\mapsto Q(-X)Q$. Further observe that $G^2=I$. This feasibility problems is a consequence of the following EDM characterization. For further details, the reader is referred to \cite{ABT2014matrix,BT2014inverse,haydenWells,qi2014computing}.

\begin{theorem}[EDM characterization {\cite[Th.~3.3]{haydenWells}}]\label{th:EDMchar}
	Let ${X\in\Sym^{m+1}}$ be a  non-negative, hollow matrix and denote by $\widehat{X}\in\Sym^m$ the upper-left block of the matrix $Q(-X)Q$ (see \eqref{eq:lowrankEDM}). Then $X$ is a Euclidean distance matrix if and only if $\widehat{X}\in\Sym^m_+$. Moreover, when $X$ is an EDM, it is irreducibly embeddable in $\mathbb{R}^s$ where $s=\rank(\widehat{X})\leq m$.
\end{theorem}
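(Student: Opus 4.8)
The plan is to prove this as a standard characterization of Euclidean distance matrices via the \emph{Gram matrix} construction, exploiting the specific structure of the Householder matrix $Q$. Since $X$ is hollow and symmetric, the starting point is the classical fact that $X$ is an EDM embeddable in $\mathbb{R}^q$ if and only if the centered matrix $-\tfrac12 J X J$ is positive semi-definite of rank at most $q$, where $J = I - \tfrac{1}{m+1}\mathbf{1}\mathbf{1}^\trans$ is the centering (orthogonal projection onto the orthogonal complement of $\mathbf{1}$). The key observation that makes the Householder formulation work is that the vector $v$ defining $Q$ is a specific perturbation of the all-ones vector $\mathbf{1}$, chosen so that $Q$ maps $\mathbf{1}$ to a scalar multiple of the last standard basis vector $e_{m+1}$. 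Consequently, conjugation by $Q$ block-diagonalizes the centering operation: the upper-left $m\times m$ block $\widehat{X}$ of $Q(-X)Q$ carries exactly the information of the centered matrix $-\tfrac12 J X J$ restricted to the hyperplane $\mathbf{1}^\perp$, while the last row and column (the $d$ and $\delta$ entries) absorb the components along $\mathbf{1}$.

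First I would verify the action of $Q$ on $\mathbf{1}$ explicitly, confirming $Qv$ is aligned with $e_{m+1}$ by the defining property of a Householder reflector, so that $Q$ sends the distinguished direction $\mathbf{1}$ to the last coordinate axis. Second, I would use hollowness of $X$ together with this alignment to show that the embeddability-relevant part of $-X$, namely its restriction to $\mathbf{1}^\perp$, is represented in the transformed coordinates precisely by the block $\widehat{X}$. This is where the theorem's hypothesis that $X$ is hollow and non-negative is consumed: hollowness guarantees the diagonal contributions vanish appropriately so that the Gram-matrix interpretation is clean. Third, I would appeal to the classical Schoenberg-type equivalence: $X$ is an EDM if and only if the Gram matrix of the centered points is positive semi-definite, which in the new coordinates reads $\widehat{X}\in\Sym^m_+$. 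The embedding dimension statement then follows because the rank of a Gram matrix equals the dimension of the affine span of the generating points, giving irreducible embeddability in $\mathbb{R}^s$ with $s=\rank(\widehat{X})$.

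The main obstacle I anticipate is the bookkeeping in the second step: tracking exactly how the $2\times 2$ block structure of $Q(-X)Q$ separates the ``geometric'' information (living in $\widehat{X}$) from the auxiliary data $d$ and $\delta$, and confirming that $\widehat{X}$ is genuinely the Gram matrix of the embedding points up to the standard centering and the factor $-\tfrac12$. The cleanest route is to invoke the known equivalence directly and merely verify that the Householder conjugation realizes the centering transformation, rather than re-deriving Schoenberg's theorem from scratch.

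Since this statement is quoted as \cite[Th.~3.3]{haydenWells}, the honest and most economical approach is to cite that reference: the result is established there, and the present paper uses it only as an input. I would therefore present the argument above as the conceptual justification while attributing the full proof to Hayden and Wells, noting that it rests on the standard correspondence between distance matrices and centered Gram matrices combined with the specific choice of $Q$ that diagonalizes the centering along the $\mathbf{1}$ direction.
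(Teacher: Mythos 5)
Your proposal matches the paper exactly: the paper states this result as a quoted theorem from Hayden and Wells \cite[Th.~3.3]{haydenWells} and offers no proof of its own, which is precisely the citation-based route you settle on. Your conceptual sketch is also sound --- one checks directly that $Qv=-v$ and $Q\mathbf{1}=-\sqrt{m+1}\,e_{m+1}$, so conjugation by $Q$ carries the restriction of $-X$ to $\mathbf{1}^\perp$ onto the upper-left block $\widehat{X}$, reducing the claim to the classical Schoenberg-type characterization of hollow EDMs via positive semi-definiteness on $\mathbf{1}^\perp$.
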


\begin{remark}[Properties of the constraint sets]\label{re:constraints}
 Observe that the constraint set $C_1$ is closed and convex. In fact, it is the intersection of a closed convex cone and a closed affine subspace. The constraint set $C_2$ is non-convex and can alternatively be described as the pre-image of the set $\mathcal{S}_s$ under the linear operator which maps $X\in\Sym^{m+1}$ to $\widehat{X}\in\Sym^m$ as given in \eqref{eq:lowrankEDM}. Note that both the mathematical challenging and physically meaning cases arise when the sparsity bound satisfies $1<s<m$.
\qede\end{remark}

The goal of the remainder of this section is to provide conditions for which the following local convergence result due to Phan \cite{phan2014linearDR} applies. To apply the result we shall need to check two properties: \emph{super-regularity} and \emph{strong regularity}.

\begin{theorem}[Local convergence {\cite[Th.~4.3]{phan2014linearDR}}]\label{th:phan}
 Suppose $\Omega_1$ and $\Omega_2$ are super-regular at $\overline{x}\in \Omega_1\cap \Omega_2$ and $\{\Omega_1,\Omega_2\}$ is strongly regular at $\overline{x}$. For any $x_0$ sufficiently close to $\overline{x}$, the Douglas--Rachford method converges to a point $\Omega_1\cap \Omega_2$ with $R$-linear rate.
\end{theorem}
 
 Rather than verifying super-regularity directly, we instead consider the case in which the constraints in \eqref{eq:lowrankEDM} satisfy a stronger property. Later we shall see that for the application considered, this is in fact the case. We also note that, in light of Remark~\ref{re:affine-hull regularity}, there is nothing to gain in applying \cite[Th.~4.7]{phan2014linearDR} (the affine-hull regularity analogue of Theorem~\ref{th:phan}).
 
 \begin{proposition}[Prox-regularity of constraints]\label{prop:proxReg}
   Suppose $\overline{X}\in C_1\cap C_2$ with $\rank\widehat{X}=s$ where $C_1,C_2$ and $\widehat{X}$ are as defined in \eqref{eq:lowrankEDM}. Then $C_1$ and $C_2$ are prox-regular (and hence also super-regular) at $\overline{X}$.
 \end{proposition}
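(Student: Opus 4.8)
The plan is to establish prox-regularity of each constraint set separately, exploiting the structure identified in Remark~\ref{re:constraints}. For $C_1$, the argument is immediate: since $C_1$ is closed and convex (being the intersection of a closed convex cone with a closed affine subspace), it is prox-regular at every point of $C_1$, including $\overline{X}$. This is the standard fact that the projection onto a closed convex set is everywhere single-valued, so no work is required beyond citing convexity.

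The substantive content is prox-regularity of $C_2$ at $\overline{X}$. The key observation is that $C_2$ is the pre-image of $\mathcal{S}_s$ under the linear isometry $G:X\mapsto Q(-X)Q$ composed with the projection onto the upper-left block. First I would make this precise: writing $G(X)$ in the block form of \eqref{eq:lowrankEDM}, the set $C_2$ consists exactly of those $X$ whose image $\widehat{X}$ under the (surjective) linear map extracting the upper-left block lies in $\mathcal{S}_s$, while the off-diagonal block $d$ and corner $\delta$ are unconstrained. The plan is then to invoke Proposition~\ref{prop:proxRegSs}: since $\rank\widehat{X}=s$ and $s\in\{1,\dots,m-1\}$ (the physically meaningful regime noted in Remark~\ref{re:constraints}), the set $\mathcal{S}_s$ is prox-regular at $\widehat{X}$. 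The remaining step is to transfer prox-regularity from $\mathcal{S}_s$ back to $C_2$ through the linear map.

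The main obstacle, and the step requiring the most care, is this transfer of prox-regularity under the linear pre-image. Prox-regularity is not preserved under arbitrary linear pre-images, but here the structure is favorable: $G$ is a linear \emph{isometry} with $G^2=I$, and the block decomposition splits $\Sym^{m+1}$ orthogonally into the copy of $\Sym^m$ carrying $\widehat{X}$ and the complementary subspace carrying $(d,\delta)$. Concretely, I would argue that $C_2 = G\bigl(\mathcal{S}_s \times V\bigr)$, where $V$ is the unconstrained complementary subspace, and that a projection onto a product set decomposes coordinatewise. Since projection onto the (full, hence trivially prox-regular) complementary subspace is single-valued, and projection onto $\mathcal{S}_s$ is single-valued in a neighbourhood of $\widehat{X}$ by Proposition~\ref{prop:proxRegSs}, the projection onto the product is single-valued near the relevant point. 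Composing with the isometry $G$ (which preserves distances and hence commutes with the single-valuedness characterization of prox-regularity) shows $P_{C_2}$ is single-valued near $\overline{X}$, which is exactly the characterization of prox-regularity from \cite[Th.~1.3]{rockafellar2000local}.

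Finally, the parenthetical ``and hence also super-regular'' follows from the general implication that prox-regularity implies super-regularity, so once both $C_1$ and $C_2$ are shown prox-regular at $\overline{X}$, the super-regularity needed for Theorem~\ref{th:phan} is automatic and the proof is complete.
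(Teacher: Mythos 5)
Your proposal is correct and follows essentially the same route as the paper: convexity for $C_1$; prox-regularity of $\mathcal{S}_s$ at $\widehat{X}$ (Proposition~\ref{prop:proxRegSs}) transferred to $C_2$ through the product structure of $G(C_2)$ and the isometry $G$ with $G^2=I$; then prox-regularity implies super-regularity. The only slight difference is that you restrict to $s\in\{1,\dots,m-1\}$, whereas the paper's statement places no such restriction and covers the remaining case by also citing Remark~\ref{re:proxRegSs} (for $s=m$ the set $\mathcal{S}_m=\Sym^m_+$ is convex, hence trivially prox-regular).
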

\begin{proof}
	Since $\overline{X}\in C_1\cap C_2$, both $C_1$ and $C_2$ are nonempty. As noted in Remark~\ref{re:constraints}, $C_1$ is closed and convex, and hence is everywhere prox-regular. 
	To deduce the prox-regularity of $C_2$, first observe that, by Proposition~\ref{prop:proxRegSs} and Remark~\ref{re:proxRegSs}, the set $\mathcal{S}_s$ is prox-regular at $\widehat{X}$, and hence so too is the set
 $$ G(C_2) = \left\{\begin{bmatrix}
				  	\widehat{Z} & d \\
				  	d^\trans         & \delta \\
			  \end{bmatrix} : \widehat{Z}\in\mathcal{S}_s,\,d\in\mathbb{R}^{m},\,\delta\in\mathbb{R}\right\}$$
	is prox-regular at $G(\overline{X})$. 						  
	Since $G$ is an isometry and $G^2=I$, we deduce that the set $C_2=G(G(C_2))$ is prox-regular at the point $\overline{X}=G(G(\overline{X}))$. The fact that prox-regularity implies super-regularity can be found in \cite[Prop.~4.9]{lewis2009local}.
\end{proof}

We now turn our attention to strong regularity of $\{C_1,C_2\}$ at $\overline{X}$. The next two Propositions give the respective normal cones.

\begin{proposition}[Normal cone to $C_1$]\label{prop:NC1}
 Let $\overline{X}\in C_1$. Then
  $$N_{C_1}(\overline{X})=\left\{Y+Z:\overline{X}\odot Z=0,\,Z\leq 0,\,Y_{ij}=0\text{ for all }(i,j)\not\in\mathcal{I}\right\}.$$
 In particular, if $\overline{X}$ has zeros on the main diagonal, has strictly positive entries elsewhere, and $(j,j)\in\mathcal{I}$ for $j\in\{1,2,\dots,m\}$ then
  $$N_{C_1}(\overline{X})=\left\{Y:Y_{ij}=0\text{ for all }(i,j)\not\in\mathcal{I}\right\}.$$
\end{proposition}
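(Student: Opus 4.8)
The plan is to exploit the fact that $C_1$ is closed and convex, so that its Mordukhovich normal cone coincides with the convex normal cone by Fact~\ref{fact:normalInclusion}(c), and then to compute that convex normal cone through the decomposition $C_1 = A \cap B$, where $A := \{X \in \Sym^{m+1} : X \geq 0\}$ is the (entrywise) nonnegativity cone and $B := \{X \in \Sym^{m+1} : X_{ij} = D_{ij} \text{ for all } (i,j) \in \mathcal{I}\}$ is an affine subspace. The feature I intend to lean on is that both $A$ and $B$ are \emph{polyhedral}; this is precisely what will let me split the normal cone without any extra qualification condition, in sharp contrast to the genuinely non-convex situation discussed in Remark~\ref{re:failure of intersection rule}.

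First I would compute the two pieces separately, working throughout in $\Sym^{m+1}$ under the Frobenius inner product and reading off coordinates entrywise. The nonnegativity cone $A$ is a product of nonnegative half-lines over the independent entries, so by the entrywise analogue of Corollary~\ref{cor:NR+s} its normal cone is $N_A(\overline X) = \{Z : Z \leq 0,\ \overline X \odot Z = 0\}$; here $Z \leq 0$ is the polar condition and $\overline X \odot Z = 0$ is complementary slackness, the latter being equivalent to $\langle Z,\overline X\rangle = 0$ because each product $Z_{ij}\overline X_{ij}$ is already $\leq 0$. The normal cone to the affine subspace $B$ is the orthogonal complement of its direction space $\{X : X_{ij} = 0 \text{ for } (i,j)\in\mathcal I\}$, which is immediately seen to be $N_B(\overline X) = \{Y : Y_{ij} = 0 \text{ for } (i,j)\notin\mathcal I\}$.

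Next I would combine the pieces. The inclusion ``$\supseteq$'' is a direct inner-product verification: for any $W \in C_1$ and any $Y+Z$ of the stated form, the term $\langle Y, W-\overline X\rangle$ vanishes, since $Y$ is supported on $\mathcal I$ while $W$ and $\overline X$ agree there, whereas $\langle Z, W-\overline X\rangle = \langle Z,W\rangle - \langle Z,\overline X\rangle \leq 0$ (the first summand is $\leq 0$ because $Z \leq 0 \leq W$, and the second vanishes because $\overline X \odot Z = 0$); hence $Y+Z \in N^{\rm conv}_{C_1}(\overline X)$. The reverse inclusion is the substantive step and is exactly the sum rule $N_{A\cap B}(\overline X) = N_A(\overline X) + N_B(\overline X)$, which for intersections of polyhedral convex sets holds with no relative-interior or transversality hypothesis. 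Invoking this polyhedral calculus yields the first displayed formula, and the main obstacle of the proof lies precisely in this reverse inclusion, which I handle by polyhedrality rather than by a constraint qualification.

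Finally, for the ``in particular'' case I would specialize. When $\overline X$ has strictly positive off-diagonal entries, the condition $\overline X \odot Z = 0$ with $Z \leq 0$ forces $Z_{ij} = 0$ for $i \neq j$, leaving only $Z_{jj} \leq 0$ on the diagonal. Since every diagonal position lies in $\mathcal I$, such a diagonal $Z$ already satisfies $Z_{ij} = 0$ for all $(i,j)\notin\mathcal I$, so it can be absorbed into the $Y$-component; the formula therefore collapses to $N_{C_1}(\overline X) = \{Y : Y_{ij} = 0 \text{ for all } (i,j)\notin\mathcal I\}$. The one point requiring care throughout is the symmetric structure, since $\mathcal I$ is to be read symmetrically and the entries of a symmetric matrix are paired; I would keep every computation at the level of $\Sym^{m+1}$ with the Frobenius inner product so that no factor-of-two bookkeeping intrudes.
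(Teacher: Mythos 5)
Your proposal is correct and follows essentially the same route as the paper: the same decomposition of $C_1$ into the entrywise-nonnegativity cone and the affine set fixing the entries indexed by $\mathcal{I}$, the same appeal to the polyhedral sum rule (no qualification condition needed, exactly as the paper notes via \cite[Cor.~5.1.9]{ConvAnalNonOpt}), the same two component normal cones, and the same absorption of the diagonal part of $Z$ into $Y$ for the ``in particular'' case. The only cosmetic difference is that you verify the ``$\supseteq$'' inclusion by hand, whereas the paper obtains both inclusions at once from the sum rule.
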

\begin{proof}
 Observe that $C_1$ is the intersection of the polyhedral sets
  $$ A:=\left\{X\in\Sym^{m+1}:X_{ij}=D_{ij}\text{ for all }(i,j)\in\mathcal{I}\right\},\quad
     K:=\left\{X\in\Sym^{m+1}:X\geq 0\right\}.$$
 We note that the intersection of these sets is non-empty since $C_1$ is assumed nonempty. The \emph{(polyhedral) sum rule} \cite[Cor.~5.1.9]{ConvAnalNonOpt} thus ensures that
	 $$N_{C_1}(\overline{X})=N_A(\overline{X})+N_{K}(\overline{X}).$$     
 The normal cones to $A$ and $K$ are given by
  \begin{align*}
    N_A(\overline{X}) &= \{Y\in\Sym^{m+1}:Y_{ij}=0\text{ for all }(i,j)\not\in\mathcal{I}\},\\
    N_{K}(\overline{X}) &= \{Z\in\Sym^{m+1}:\overline{X}\odot Z=0,\,Z\leq 0\},
  \end{align*}
 and the first claim follows.
 
 In particular, suppose  $\overline{X}$ has zeros on the main diagonal and strictly positive entries elsewhere, and that $(j,j)\in\mathcal{I}$ for $j\in\{1,2,\dots,m\}$. Let $Z\in N_{K}(\overline{X})$. Since $\overline{X}\odot Z=0$, it follows that $Z$ can be non-zero only on its main diagonal. In this case, we therefore have the inclusion
   $$N_{K}(\overline{X}) \subseteq  \{Z\in\Sym^{m+1}:Z\leq 0,\,Z_{ij}=0\text{ for all }(i,j)\not\in\mathcal{I}\}\subseteq N_A(\overline{X}).$$
 As before, the result follows by an application of the (polyhedral) sum rule.
\end{proof}

\begin{proposition}[Normal cone to $C_2$]\label{prop:NC2}
 Let $\overline{X}\in C_2$ and
 $\widehat{X}\in\mathcal{S}_s$. Then
   $$N_{G(C_2)}\left(G(\overline{X})\equiv\begin{bmatrix}
                     \widehat{X} & d \\
                     d^\trans         & \delta \\
                     \end{bmatrix}\right)
             =\left\{\begin{bmatrix}
                                   \widehat{Y} & 0 \\
                                     0         & 0 \\
                                  \end{bmatrix}:
                                  \widehat{Y}\in N_{\mathcal{S}_s}(\widehat{X})\right\}.$$
 In particular, if $\rank\widehat{X}=s$ then
     $$N_{G(C_2)}\left(G(\overline{X})\equiv\begin{bmatrix}
                     \widehat{X} & d \\
                     d^\trans         & \delta \\
                     \end{bmatrix}\right)
             =\left\{\begin{bmatrix}
                                   \widehat{Y} & 0 \\
                                     0         & 0 \\
                                  \end{bmatrix}:
                                 \widehat{X}\widehat{Y}=0\right\}.$$
\end{proposition}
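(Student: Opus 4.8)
The plan is to compute $N_{G(C_2)}(G(\overline{X}))$ by recognizing $G(C_2)$ as a \emph{product-type} set in disguise: a matrix in $\Sym^{m+1}$ lies in $G(C_2)$ exactly when its upper-left $m\times m$ block $\widehat{Z}$ belongs to $\mathcal{S}_s$, with the remaining border entries $d\in\mathbb{R}^m$ and $\delta\in\mathbb{R}$ completely free. First I would make this product structure explicit. Writing a generic symmetric matrix $M\in\Sym^{m+1}$ in block form with upper-left block $\widehat{Z}\in\Sym^m$, border $d\in\mathbb{R}^m$, and corner $\delta\in\mathbb{R}$, the coordinates $(\widehat{Z},d,\delta)$ give a linear isomorphism $\Sym^{m+1}\cong\Sym^m\times\mathbb{R}^m\times\mathbb{R}$, and under this identification $G(C_2)=\mathcal{S}_s\times\mathbb{R}^m\times\mathbb{R}$. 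Since the inner product on $\Sym^{m+1}$ splits as a sum over these blocks (with the off-diagonal entries counted twice, but this scaling does not affect the normal cone), the identification is a bijective correspondence of inner-product spaces up to positive scaling of the border coordinates.

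The key step is then the normal cone of a Cartesian product. For closed sets $\Omega_1,\Omega_2$ one has $N_{\Omega_1\times\Omega_2}(\overline{x}_1,\overline{x}_2)=N_{\Omega_1}(\overline{x}_1)\times N_{\Omega_2}(\overline{x}_2)$ for the Mordukhovich normal cone; applied here with $\Omega_1=\mathcal{S}_s$ and $\Omega_2=\mathbb{R}^m\times\mathbb{R}$ (a full linear space, whose normal cone at any point is $\{0\}$), this yields
\begin{equation*}
 N_{G(C_2)}(G(\overline{X}))
   = \left\{\begin{bmatrix}\widehat{Y} & 0 \\ 0 & 0\end{bmatrix}:\widehat{Y}\in N_{\mathcal{S}_s}(\widehat{X})\right\},
\end{equation*}
which is the first claimed formula. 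The product rule for normal cones can be invoked directly; alternatively one can verify it from the projection characterization, observing that $P_{G(C_2)}(M)$ acts as $P_{\mathcal{S}_s}$ on the $\widehat{Z}$-block and as the identity on the free coordinates, so the vectors $M-P_{G(C_2)}(M)$ generating the proximal/normal cone are supported entirely on the $\widehat{Z}$-block with zero border. The second formula follows immediately by substituting the maximal-rank specialization $N_{\mathcal{S}_s}(\widehat{X})=N_{\mathcal{R}_s}(\widehat{X})=\{\widehat{Y}:\widehat{X}\widehat{Y}=0\}$ from Theorem~\ref{th:NSs} together with Proposition~\ref{prop:NRs}, valid precisely when $\rank\widehat{X}=s$.

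The main obstacle I anticipate is bookkeeping around the inner-product scaling: the Frobenius inner product weights the off-diagonal border entries of a symmetric matrix by a factor of two relative to the diagonal, so the coordinate isomorphism $\Sym^{m+1}\cong\Sym^m\times\mathbb{R}^m\times\mathbb{R}$ is an isometry only after rescaling the $d$-coordinates. I must confirm that this rescaling is harmless—it is, because normal cones are invariant under the relevant positive-definite diagonal metric changes on the free block, and in any case the free block contributes only $\{0\}$ to the normal cone, so its metric is irrelevant. The only genuinely substantive ingredient is the product-rule reduction; once the set is seen as $\mathcal{S}_s$ times a free space, everything else is routine substitution of the already-established formula for $N_{\mathcal{S}_s}$.
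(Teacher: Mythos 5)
Your proposal is correct and takes essentially the same route as the paper: the paper's proof likewise treats $G(C_2)$ as a Cartesian product of $\mathcal{S}_s$ with a free linear block, invokes the product rule for Mordukhovich normal cones together with Theorem~\ref{th:NSs} for the first formula, and obtains the rank-$s$ specialization from Proposition~\ref{prop:NRs}. Your extra care with the off-diagonal inner-product weighting is sound but, as you yourself observe, immaterial because the free block contributes only $\{0\}$ to the normal cone.
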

\begin{proof}
 The first formula follows from Theorem~\ref{th:NSs} and the fact that the normal cone of a Cartesian product is the Cartesian product of the normal cones \cite[Prop.~1.2]{mordukhovich2006generalized}. The second formula follows from Proposition~\ref{prop:NRs}.
\end{proof}

The following proposition gives a formulation of strong regularity in terms of the normal cone formula in Proposition~\ref{prop:NC1} \& \ref{prop:NC2}. That is, in terms of known objects.
\begin{lemma}[Strong regularity of $\{C_1,C_2\}$]\label{lem:strongRegEDM}
 Let $\overline{X}\in C_1\cap C_2$. Then $\{C_1,C_2\}$ is strong regularity at $\overline{X}$ if and only if 
    $$G(N_{C_1}(\overline{X}))\cap -N_{G(C_2)}(G(\overline{X}))=\{0\}.$$
\end{lemma}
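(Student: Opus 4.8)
The plan is to recall that strong regularity of $\{C_1,C_2\}$ at $\overline{X}$ means, by the two-set formulation in \eqref{eq:strong regularity two sets}, that
\[
 N_{C_1}(\overline{X})\cap\bigl(-N_{C_2}(\overline{X})\bigr)=\{0\}.
\]
The task is therefore to transfer this condition, which is stated in terms of $N_{C_2}(\overline{X})$, into the equivalent condition stated in terms of $N_{G(C_2)}(G(\overline{X}))$, since the latter is what Proposition~\ref{prop:NC2} actually computes. The bridge is the linear isometry $G$ satisfying $G^2=I$. First I would record how $G$ acts on the relevant normal cones: because $G$ is a linear isometry, it maps the normal cone of $C_2$ at $\overline{X}$ onto the normal cone of the image set $G(C_2)$ at the image point $G(\overline{X})$. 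That is, $N_{G(C_2)}(G(\overline{X}))=G\bigl(N_{C_2}(\overline{X})\bigr)$, equivalently $N_{C_2}(\overline{X})=G\bigl(N_{G(C_2)}(G(\overline{X}))\bigr)$, where the second form uses $G^{-1}=G$.

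Next I would apply $G$ to both factors of the intersection defining strong regularity. Applying $G$ to the equation $N_{C_1}(\overline{X})\cap(-N_{C_2}(\overline{X}))=\{0\}$ and using that $G$ is a (linear) bijection with $G(\{0\})=\{0\}$, the equation holds if and only if
\[
 G\bigl(N_{C_1}(\overline{X})\bigr)\cap G\bigl(-N_{C_2}(\overline{X})\bigr)=\{0\}.
\]
Here I use that a bijective linear map commutes with intersections and preserves the singleton $\{0\}$, so applying $G$ to an intersection equalling $\{0\}$ yields the intersection of the images equalling $\{0\}$, and conversely by applying $G$ again. The two key substitutions are then: $G(-N_{C_2}(\overline{X}))=-G(N_{C_2}(\overline{X}))=-N_{G(C_2)}(G(\overline{X}))$, using linearity of $G$ to pull out the minus sign together with the normal-cone image identity above. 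Substituting this into the displayed equation yields exactly $G(N_{C_1}(\overline{X}))\cap(-N_{G(C_2)}(G(\overline{X})))=\{0\}$, which is the claimed condition.

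The only genuinely nontrivial ingredient is the normal-cone image identity $N_{G(\Omega)}(G(\overline{X}))=G(N_\Omega(\overline{X}))$ for the Mordukhovich normal cone under a linear isometry $G$; the rest of the argument is the bookkeeping of pushing $G$ through an intersection and a sign. I expect this identity to be the main point requiring care, since it is the step where the geometry of $C_2$ is genuinely used. It follows from the definition of the Mordukhovich normal cone recalled in Section~\ref{sec:prelim}: because $G$ is an isometry, it preserves distances and hence commutes with the projection operator, giving $P_{G(\Omega)}(G(x))=G(P_\Omega(x))$, so that the approximating sequences $(x_n),(y_n)$ witnessing membership in $N_\Omega(\overline{X})$ map under $G$ to sequences witnessing membership in $N_{G(\Omega)}(G(\overline{X}))$, and conversely since $G^2=I$ makes $G$ its own inverse. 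Once this identity is in hand, the equivalence is immediate, so I would state the isometry-invariance of the normal cone explicitly and then close with the short chain of substitutions above.
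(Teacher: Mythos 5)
Your proposal is correct and takes essentially the same route as the paper: both proofs hinge on the identity $G\left(N_{C_2}(\overline{X})\right)=N_{G(C_2)}\left(G(\overline{X})\right)$ and then push the linear bijection $G$ through the intersection in \eqref{eq:strong regularity two sets}, using $G^2=I$ and linearity to handle the sign and the singleton $\{0\}$. The only difference is how that key identity is justified --- the paper cites \cite[Th.~1.17]{mordukhovich2006generalized}, using that $G$ is linear, self-inverse and hence self-adjoint, whereas you derive it directly from the projection-based definition of the Mordukhovich normal cone via isometry-invariance of projections --- and both justifications are valid.
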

\begin{proof}
 Since $G$ is self-inverse (hence self-adjoint) and linear, applying \cite[Th.~1.17]{mordukhovich2006generalized} yields
  $$N_{C_2}(\overline{X}) = G\left(N_{G(C_2)}(G(\overline{X}))\right) \implies G\left(N_{C_2}(\overline{X})\right) = N_{G(C_2)}(G(\overline{X})).$$
Hence
 \begin{align*}
  \{0\}=N_{C_1}(\overline{X})\cap -N_{C_2}(\overline{X})
  &\iff \{0\} = G(N_{C_1}(\overline{X}))\cap -G(N_{C_2}(\overline{X})) \\
  &\iff \{0\} = G(N_{C_1}(\overline{X}))\cap -N_{G(C_2)}(G(\overline{X})).
 \end{align*}
 The proof is complete.
\end{proof}

The following results shows that strong regularity of $\{C_1,C_2\}$ can be expressed in terms of a conditions which could, in principle,  be checked once a solution is known. 

\begin{theorem}[Regularity and local convergence]\label{cor:EDMworks}
Suppose $\overline{X}\in C_1\cap C_2$ is irreducibly embeddable in $\mathbb{R}^s$, has zeros on the main diagonal and has strictly positive entries elsewhere. Then both $C_1$ and $C_2$ are prox-regular at $\overline{X}$. The collection $\{C_1,C_2\}$ is strongly regular at $\overline{X}$ if and only if there exists no non-zero pair $(Y,\widehat{Y})\in\Sym^{m+1}\times\Sym^m$ such that
 \begin{equation}\label{eq:EDMworks}
  G(Y)=\begin{bmatrix}
                     \widehat{Y} & 0 \\
                     0           & 0 \\
                     \end{bmatrix},
                \widehat{X}\widehat{Y}=0,
                 Y_{ij}=0\text{ for all }(i,j)\not\in\mathcal{I}.\end{equation}
 Consequently, whenever \eqref{eq:EDMworks} holds, the Douglas--Rachford method converges with $R$-linear rate whenever the initial point is sufficiently close to $\overline{X}$.
\end{theorem}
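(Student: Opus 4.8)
The plan is to assemble the conclusion from three pieces already established in the excerpt. First I would verify prox-regularity: since $\overline{X}$ is irreducibly embeddable in $\mathbb{R}^s$, Theorem~\ref{th:EDMchar} tells us that $\rank(\widehat{X})=s$ exactly. This is precisely the hypothesis of Proposition~\ref{prop:proxReg}, so I would invoke it directly to conclude that both $C_1$ and $C_2$ are prox-regular (and therefore super-regular) at $\overline{X}$. This handles the super-regularity ingredient needed by Theorem~\ref{th:phan}.

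Next I would translate the strong-regularity condition into the explicit system \eqref{eq:EDMworks}. The starting point is Lemma~\ref{lem:strongRegEDM}, which says $\{C_1,C_2\}$ is strongly regular at $\overline{X}$ if and only if $G(N_{C_1}(\overline{X}))\cap\bigl(-N_{G(C_2)}(G(\overline{X}))\bigr)=\{0\}$. I would now substitute the concrete normal cone descriptions. Because $\overline{X}$ has zeros on the main diagonal, strictly positive entries elsewhere, and (being hollow with diagonal indices recorded) satisfies $(j,j)\in\mathcal{I}$, the second, simplified formula in Proposition~\ref{prop:NC1} applies: $N_{C_1}(\overline{X})=\{Y:Y_{ij}=0\text{ for all }(i,j)\notin\mathcal{I}\}$. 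Since $\rank\widehat{X}=s$, the second formula in Proposition~\ref{prop:NC2} gives $N_{G(C_2)}(G(\overline{X}))$ as the set of block matrices $\begin{bmatrix}\widehat{Y}&0\\0&0\end{bmatrix}$ with $\widehat{X}\widehat{Y}=0$. A nonzero element of the intersection $G(N_{C_1}(\overline{X}))\cap\bigl(-N_{G(C_2)}(G(\overline{X}))\bigr)$ is then exactly a nonzero $Y$ with $Y_{ij}=0$ off $\mathcal{I}$ whose image $G(Y)$ has the prescribed block form with $\widehat{X}\widehat{Y}=0$ — which is the system \eqref{eq:EDMworks}. (Here I would note that these cones are symmetric under negation, or simply absorb the sign into $\widehat{Y}$, so the minus sign is harmless.) Thus strong regularity holds if and only if \eqref{eq:EDMworks} has no nonzero solution.

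Finally I would combine the two strands: prox-regularity yields super-regularity of $C_1$ and $C_2$ at $\overline{X}$, and the failure of \eqref{eq:EDMworks} to admit a nonzero solution gives strong regularity of $\{C_1,C_2\}$. These are exactly the hypotheses of Phan's Theorem~\ref{th:phan}, whose conclusion is $R$-linear convergence of the Douglas--Rachford iteration from any sufficiently close starting point. Invoking that theorem closes the argument.

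The main obstacle is the bookkeeping in the middle step, namely checking that the geometric hypotheses on $\overline{X}$ genuinely trigger the \emph{simplified} forms of Propositions~\ref{prop:NC1} and~\ref{prop:NC2} rather than their general forms. Specifically, one must confirm that an EDM that is hollow with strictly positive off-diagonal entries meets the ``zeros on the diagonal, strictly positive elsewhere, diagonal indices in $\mathcal{I}$'' conditions of Proposition~\ref{prop:NC1}, and that irreducible embeddability in $\mathbb{R}^s$ forces $\rank\widehat{X}=s$ (via Theorem~\ref{th:EDMchar}) so that the rank-exact branch of Proposition~\ref{prop:NC2} is in force. Once those two simplifications are justified, the equivalence with \eqref{eq:EDMworks} is a direct unwinding of Lemma~\ref{lem:strongRegEDM}, and the convergence conclusion is immediate from Theorem~\ref{th:phan}.
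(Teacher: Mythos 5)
Your proposal is correct and follows essentially the same route as the paper's proof: Theorem~\ref{th:EDMchar} plus Proposition~\ref{prop:proxReg} for prox-regularity, the simplified normal cone formulae of Propositions~\ref{prop:NC1} and~\ref{prop:NC2} substituted into Lemma~\ref{lem:strongRegEDM} for the strong-regularity equivalence, and Theorem~\ref{th:phan} for the $R$-linear convergence. If anything, you are more explicit than the paper in checking that the hypotheses trigger the simplified (rather than general) normal cone formulae, including the implicit requirement $(j,j)\in\mathcal{I}$ and the harmlessness of the minus sign since both cones are subspaces.
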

\begin{proof}
 Prox-regularity follows from the irreducible embeddability assumption combined with Theorem~\ref{th:EDMchar} and Proposition~\ref{prop:proxReg}. Equation~\eqref{eq:EDMworks} follows immediately by combining the normal cone formulae in Propositions~\ref{prop:NC1}~\&~\ref{prop:NC2} with the equivalent definition of strong regularity in Lemma~\ref{lem:strongRegEDM}.  
 Local convergence of the Douglas--Rachford algorithm now follows from Theorem~\ref{th:phan}.
\end{proof}

\section{Conclusion}
 In this paper, novel formulae for the Mordukhovich normal cones to the sets of non-negative sparse vector and low-rank positive semi-definite matrix have been provided. These normal cones are of interest, for instance, in examining the regularity properties of constraint sets in various applications. As a concrete application of our results, the precise conditions under which the current non-convex state-of-the-art convergence theory for the Douglas--Rachford algorithm holds, when applied to low-rank Euclidean distance matrix reconstruction, have been characterized. Regarding this particular application, an avenue for further investigation would be to numerically check condition \eqref{eq:EDMworks}, and thus determine if strong regularity holds in real-world datasets. It is worth noting that, in \eqref{eq:EDMworks}, the matrix $\widehat{X}$ and index set $\mathcal{I}$ are specific to each problem instance. In practice, these are determined from collected data and will vary from experiment to experiment. For this reasons, it would there also be useful to see if a probabilistic argument can be used to show that the strong regularity condition in Theorem~\ref{cor:EDMworks} holds is some \emph{generic} sense.

\paragraph*{Acknowledgements.}
The author would like to thank Jonathan Borwein for his suggestions. The author is supported by the Deutsche Forschungsgemeinschaft Research Training Grant 2088. The work was partly performed during the author's candidature at the University of Newcastle with the support of an Australian Postgraduate award.

{\footnotesize

}

\end{document}